\newtheorem{thm}{Theorem}[section]
\newtheorem{prop}[thm]{Proposition}
\theoremstyle{definition}
\newtheorem{defn}[thm]{Definition}
\newtheorem{example}[thm]{Example}
\theoremstyle{remark}
\newtheorem{rem}[thm]{Remark}
\numberwithin{equation}{section}
\begin{document}
\title[Distributional chaos and Li-Yorke chaos in metric spaces]{Distributional chaos and Li-Yorke chaos in metric spaces}

\author{Marko Kosti\' c}
\address{Faculty of Technical Sciences,
University of Novi Sad,
Trg D. Obradovi\' ca 6, 21125 Novi Sad, Serbia}
\email{marco.s@verat.net}

{\renewcommand{\thefootnote}{} \footnote{2010 {\it Mathematics
Subject Classification.} 47A06, 47A16.
\\ \text{  }  \ \    {\it Key words and phrases.} Distributional chaos; Li-Yorke chaos; binary relations; metric spaces; multivalued linear operators.
\\  \text{  }  \ \ The author is partially supported by grant 174024 of Ministry of Science and Technological Development, Republic of Serbia.}}

\begin{abstract}
In this paper, we introduce several new types and generalizations of the concepts distributional chaos and Li-Yorke chaos.
We consider the general sequences of binary relations acting between metric spaces, while in a separate section we focus our attention to some special features of distributionally chaotic and Li-Yorke chaotic multivalued linear operators in Fr\' echet spaces.
\end{abstract}
\maketitle

\section{Introduction and Preliminaries}\label{intro}

Let $X$ be a separable Fr\' echet space. A
linear operator $T$ on $X$ is said to be hypercyclic iff there
exists an element $x\in D_{\infty}(T)\equiv \bigcap_{n\in {\mathbb
N}}D(T^{n})$ whose orbit $\{ T^{n}x : n\in {{\mathbb N}}_{0} \}$ is
dense in $X;$ $T $ is said to be topologically transitive,
resp. topologically mixing, iff for every pair of open
non-empty subsets $U,\ V$ of $X,$ there exists $n_{0}\in {\mathbb
N}$ such that $T^{n_{0}}(U) \ \cap \ V \neq \emptyset ,$ resp. there
exists $n_{0}\in {\mathbb N}$ such that, for every $n\in {\mathbb
N}$ with $n\geq n_{0},$ $T^{n}(U) \ \cap \ V \neq \emptyset .$ A
linear operator $T$ on $X$ is said to be chaotic iff it is topologically transitive and the set of periodic points of $T,$ defined by $\{x\in D_{\infty}(T) : (\exists n\in {\mathbb N})\, T^{n}x=x\},$ is dense in $X.$

The basic facts about topological dynamics of linear continuous operators in Banach and Fr\' echet spaces can be obtained by consulting the monographs \cite{bayart} by F. Bayart, E. Matheron and
\cite{erdper} by K.-G. Grosse-Erdmann, A. Peris. In a joint research study with C.-C. Chen, J. A. Conejero and M. Murillo-Arcila \cite{kerry-drew}, the author has recently introduced and analyzed a great deal of topologically dynamical properties for multivalued linear operators (cf. also the article \cite{abakumov} by E. Abakumov, M. Boudabbous and  M. Mnif). The notion has been extended in \cite{kd-prim}  and \cite{marek-faac} for general sequences of binary relations over topological spaces. 

Distributional chaos for interval maps was introduced by B. Schweizer and J. Sm\' ital in \cite{smital} (this type of chaos was called strong chaos there, 1994). For linear continuous operators, 
distributional chaos was firstly investigated in the research studies of quantum harmonic oscillator, by J. Duan et al \cite{duan} (1999) and P. Oprocha \cite{p-oprocha} (2006). Distributional chaos for linear continuous operators in Fr\' echet spaces was analyzed by 
N. C. Bernardes Jr. et al \cite{2013JFA} (2013), while distributional chaos for closed linear operators in Fr\' echet spaces was investigated by J. A. Conejero et al \cite{mendoza} (2016). 

On the other hand, the notion of Li-Yorke chaos received enormous attention after the foundational paper of T. Y. Li and J. A. Yorke \cite{LY} (1975). Li-Yorke chaotic linear continuous operators on
Banach and Fr\' echet spaces have been systematically analyzed in \cite{2011} and \cite{band}. For more details about Li-Yorke chaos and distributional chaos  in metric and Fr\' echet spaces, we refer the reader to \cite{akin}, \cite{blan}, \cite{fu-wang}-\cite{huang}, \cite{nis-dragan}, \cite{luo}-\cite{p-oprocha-tams}, \cite{tan-fu} and references cited therein.

The main aim of this paper is to introduce various notions of distributional chaos and Li-Yorke chaos for binary relations and their sequences, working in the setting of metric spaces. In particular, we analyze the notions of reiteratively $\tilde{X}$-distributional chaos of types $1$ and $2$,
$
(\tilde{X},i)$-mixed chaos, where $i=1,2,3,4,$ and $
\tilde{X}$-Li-Yorke chaos; here, $\tilde{X}$ is a non-empty subset of metric space $X$ under our consideration.
In \cite{2018JMMA}, N. C. Bernardes Jr. et al have considered the notion of distributional chaos of type $s$ for linear continuous operators acting on Banach spaces ($s\in \{1,2,2\frac{1}{2},3\}$). In this paper, we extend the notion from this paper for general sequences of binary relations, as well (up to now, the notion from \cite{2018JMMA} has not been introduced  for linear unbounded operators on Banach spaces and their sequences). Finally, we analyze distributionally chaotic and Li-Yorke chaotic multivalued linear operators in Fr\' echet spaces by enquiring into the basic properties of associated distributionally chaotic and Li-Yorke chaotic irregular vectors and their submanifolds. Plenty of useful comments, observations and open problems enriches our study.

Albeit defined in this general framework, 
we feel duty bound to say that the notion of distributional chaos, its specifications and various generalizations are the most intriguing for orbits of continuous linear operators in Banach spaces. This follows from a series of simple counterexamples presented in this paper, which show in particular that distributional chaos and Li-Yorke chaos occur even for the general sequences of continuous linear operators on finite-dimensional spaces. All aspects and connections between the introduced concepts cannot be easily perceived within just one research paper and our intention here, actually, was to create a solid base for further explorations of distributional chaos and Li-Yorke chaos. Because of that, we can freely say that this paper is heuristic to a large extent.

The organization of material is briefly described as follows. In Subsection \ref{publi} and Subsection \ref{MLOs}, we recall the basic things about lower and upper densities as well as 
binary relations and multivalued linear operators, respectively. In Section \ref{marek-MLOs} and Section \ref{marek-MLOsk}, we analyze various types of distributional chaos, Li-Yorke chaos and distributional chaos of type $s$ ($s\in \{1,2,2\frac{1}{2},3\}$) for binary relations over metric spaces. The fourth section of paper is reserved for the study of 
distributional chaos and Li-Yorke chaos for multivalued linear operators in Fr\' echet spaces; in a separate subsection, we investigate irregular vectors and irregular manifolds. In addition to the above, 
we include the conclusion and remark section at the end of paper.

Before proceeding further, we need to recall that for each
 set $D=\{d_{n} : n\in {\mathbb N}\}$, where $(d_{n})_{n\in {\mathbb N}}$ is a strictly increasing sequence of positive integers, we define its complement $D^{c}:={\mathbb N} \setminus D$ and difference set $\{e_{n}:=d_{n+1}-d_{n} | \, n\in {\mathbb N}\}.$ Let us recall that an infinite subset $A
$ of ${\mathbb N}$ is said to be syndetic, or relatively dense, iff its difference set is bounded. The difference set of any finite subset of ${\mathbb N},$ defined similarly as above, is finite. Set 
${\mathbb N}_{n}:=\{1,\cdot \cdot \cdot,n\}$ ($n\in {\mathbb N}$) and $S_{1}:=\{z\in {\mathbb C}\, ; |z|=1\}.$ By $P(A)$ we denote the power set of $A$. 

\subsection{Lower and upper densities}\label{publi}

In this subsection, we recall the basic things about lower and upper densities that will be necessary for our further work.

Let $A\subseteq {\mathbb N}$ be non-empty. The lower density of $A,$ denoted by $\underline{d}(A),$ is defined by
$$
\underline{d}(A):=\liminf_{n\rightarrow \infty}\frac{|A \cap [1,n]|}{n},
$$
and the upper density of $A,$ denoted by $\overline{d}(A),$ is defined by
$$
\overline{d}(A):=\limsup_{n\rightarrow \infty}\frac{|A \cap [1,n]|}{n}.
$$
Further on, the lower Banach density of $A,$ denoted by $\underline{Bd}(A),$ is defined by
$$
\underline{Bd}(A):=\lim_{s\rightarrow +\infty}\liminf_{n\rightarrow \infty}\frac{|A \cap [n+1,n+s]|}{s}
$$
and the (upper) Banach density of $A,$ denoted by $\overline{Bd}(A),$ is defined by
$$
\overline{Bd}(A):=\lim_{s\rightarrow +\infty}\limsup_{n\rightarrow \infty}\frac{|A \cap [n+1,n+s]|}{s}.
$$
It is well known that the limits appearing in definitions of $
\underline{Bd}(A)$ and $
\overline{Bd}(A)$ exist as $s$ tends to $+\infty,$ as well as that
\begin{align}\label{jebu}
0\leq \underline{Bd}(A) \leq \underline{d}(A) \leq \overline{d}(A) \leq \overline{Bd}(A)\leq 1,
\end{align}
\begin{align}\label{jebu1}
\underline{d}(A) + \overline{d}(A^{c})=1
\end{align}
and
\begin{align}\label{jebu2}
\underline{Bd}(A) +\overline{Bd}(A^{c})=1.
\end{align}

\subsection{Binary relations and multivalued linear operators}\label{MLOs}

Let $X,\ Y,\ Z$ and $ T$ be given non-empty sets. A binary relation between $X$ into $Y$
is any subset
$\rho \subseteq X \times Y.$ 
If $\rho \subseteq X\times Y$ and $\sigma \subseteq Z\times T$ with $Y \cap Z \neq \emptyset,$ then
we define $\rho^{-1} \subseteq Y\times X$
and
$\sigma \circ \rho \subseteq X\times T$ by
$
\rho^{-1}:=\{ (y,x)\in Y\times X : (x,y) \in \rho \}
$
and
$$
\sigma \circ \rho :=\bigl\{(x,t) \in X\times T : \exists y\in Y \cap Z\mbox{ such that }(x,y)\in \rho\mbox{ and }
(y,t)\in \sigma \bigr\},
$$
respectively. Domain and range of $\rho$ are introduced by $D(\rho):=\{x\in X :
\exists y\in Y\mbox{ such that }(x,y)\in \rho \}$ and $R(\rho):=\{y\in Y :
\exists x\in X\mbox{ such that }(x,y)\in \rho\},$ respectively; $\rho (x):=\{y\in Y : (x,y)\in \rho\}$ ($x\in X$), $ x\ \rho \ y \Leftrightarrow (x,y)\in \rho .$
If $\rho$ is a binary relation on $X$ and $n\in {\mathbb N},$ then we define $\rho^{n}
$ inductively; $\rho^{-n}:=(\rho^{n})^{-1}$ and $\rho^{0}:=
\{(x,x) : x\in X\}.$ Put $D_{\infty}(\rho):=\bigcap_{n\in {\mathbb N}} D(\rho^{n})$ and $\rho (X'):=\{y : y\in \rho(x)\mbox{ for some }x\in X'\}$ ($X'\subseteq X$). 

In the remaining part of this subsection, we present a brief overview
of the necessary definitions and properties of multivalued linear operators\index{multivalued linear operator!MLO}. For more details about the subject, we refer the reader to the monographs  \cite{cross} by R. Cross and \cite{faviniyagi} by A. Favini, A. Yagi (in \cite{faviniyagi}, applications of multivalued linear operators to abstract degenerate differential equations have been thoroughly analyzed; for some other approaches, the reader may consult the monograph \cite{svir-fedorov} by G. A. Sviridyuk and V. E. Fedorov).

Let $X$ and $Y$ be two Fr\' echet spaces over the same field of scalars ${\mathbb K}.$ For any mapping ${\mathcal A}: X \rightarrow P(Y)$ we define $\check{{\mathcal A}}:=\{(x,y) : x\in D({\mathcal A}),\ y\in {\mathcal A}x\}.$ Then ${\mathcal A}$ is a multivalued linear operator (MLO) iff the associated binary relation $\check{{\mathcal A}}$ is a linear relation in $X\times Y,$ i.e., iff $\check{{\mathcal A}}$ is a linear subspace of $X \times Y.$ In our work, we will identify ${\mathcal A}$ and its associated linear relation $\check{{\mathcal A}},$ so that the notion of $D({\mathcal A}),$ which is a linear subspace of $X,$ as well as the sets $R({\mathcal A})$
and $D_{\infty}({\mathcal A})$ are clear. The set ${\mathcal A}^{-1}0 = \{x \in D({\mathcal A}) : 0 \in {\mathcal A}x\}$ is called the kernel\index{multivalued linear operator!kernel}
of ${\mathcal A}$ and it is denoted henceforth by $N({\mathcal A})$ or Kern$({\mathcal A}).$ The inverse ${\mathcal A}^{-1}$ and the power ${\mathcal A}^{n}$ of a MLO, introduced in the sense of corresponding definition for general binary relations, are MLOs ($n\in {\mathbb N}$).
If $X=Y,$ then we say that ${\mathcal A}$ is an MLO in $X.$
An almost immediate consequence of definition is that,
for every $x,\ y\in D({\mathcal A})$ and $\lambda,\ \eta \in {\mathbb K}$ with $|\lambda| + |\eta| \neq 0,$ we
have $\lambda {\mathcal A}x + \eta {\mathcal A}y = {\mathcal A}(\lambda x + \eta y).$ If ${\mathcal A}$ is an MLO, then ${\mathcal A}0$ is a linear manifold in $Y$
and ${\mathcal A}x = f + {\mathcal A}0$ for any $x \in D({\mathcal A})$ and $f \in {\mathcal A}x.$  The sum ${\mathcal A}+{\mathcal B}$ of MLOs ${\mathcal A}$ and ${\mathcal B},$ defined by $D({\mathcal A}+{\mathcal B}) := D({\mathcal A})\cap D({\mathcal B})$ and $({\mathcal A}+{\mathcal B})x := {\mathcal A}x +{\mathcal B}x$ ($x\in D({\mathcal A}+{\mathcal B})$),
is likewise an MLO. We write ${\mathcal A} \subseteq {\mathcal B}$ iff $D({\mathcal A}) \subseteq D({\mathcal B})$ and ${\mathcal A}x \subseteq {\mathcal B}x$
for all $x\in D({\mathcal A}).$
The scalar multiplication of an MLO ${\mathcal A} : X\rightarrow P(Y)$ with the number $z\in {\mathbb K},$ $z{\mathcal A}$ for short, is defined by
$D(z{\mathcal A}):=D({\mathcal A})$ and $(z{\mathcal A})(x):=z{\mathcal A}x,$ $x\in D({\mathcal A}).$ It is clear that $z{\mathcal A}  : X\rightarrow P(Y)$ is an MLO and $(\omega z){\mathcal A}=\omega(z{\mathcal A})=z(\omega {\mathcal A}),$ $z,\ \omega \in {\mathbb K}.$ By a periodic point of ${\mathcal A}$ we mean any vector $x\in D_{\infty}({\mathcal A})$ such that  there exists $n\in {\mathbb N}$ with $x\in {\mathcal A}^{n}x.$

Suppose that ${\mathcal A}$ is an MLO in $ X.$ Then we say that a point $\lambda \in {\mathbb K}$ is an eigenvalue of ${\mathcal A}$
iff there exists a vector $x\in X\setminus \{0\}$ such that $\lambda x\in {\mathcal A}x;$ we call $x$ an eigenvector of operator ${\mathcal A}$ corresponding to the eigenvalue $\lambda.$ Observe that, if ${\mathcal A}$ is purely multivalued (i.e., ${\mathcal A}0\neq 0$), a vector $x\in X\setminus \{0\}$ can be an  eigenvector of operator ${\mathcal A}$ corresponding to different values of scalars $\lambda.$ The point spectrum of ${\mathcal A},$ $\sigma_{p}({\mathcal A})$ for short, is defined as the union of all eigenvalues of ${\mathcal A}.$

If ${\mathcal A} : X\rightarrow P(Y)$ is an MLO, then we define the adjoint ${\mathcal A}^{\ast}: Y^{\ast}\rightarrow P(X^{\ast})$\index{multivalued linear operator!adjoint}
of ${\mathcal A}$ by its graph
$$
{\mathcal A}^{\ast}:=\Bigl\{ \bigl( y^{\ast},x^{\ast}\bigr)  \in Y^{\ast} \times X^{\ast} :  \bigl\langle y^{\ast},y \bigr\rangle =\bigl \langle x^{\ast}, x\bigr \rangle \mbox{ for all pairs }(x,y)\in {\mathcal A} \Bigr\}.
$$

\section{Distributional chaos and Li-Yorke chaos for binary relations}\label{marek-MLOs}

In this section, it will be always assumed that $(X,d)$ and  $(Y,d_{Y})$ are metric spaces.
Suppose that $\sigma>0$, $\epsilon>0$ and $(x_{k})_{k\in {\mathbb N}},\ (y_{k})_{k\in {\mathbb N}}$ are two given sequences in $Y.$
Consider the following conditions:
\begin{align}\label{jednacinaj}
\begin{split}
& \underline{Bd}\Bigl( \bigl\{k \in {\mathbb N} :
d_{Y}\bigl(x_{k},y_{k}\bigr)< \sigma \bigr\}\Bigr)=0,
\\
& \underline{Bd}\Bigl(\bigl\{k \in {\mathbb N} : d_{Y}\bigl(x_{k},y_{k}\bigr)
\geq \epsilon \bigr\}\Bigr)=0,
\end{split}
\end{align}
\begin{align}\label{jednacinaj1}
\begin{split}
& \underline{Bd}\Bigl( \bigl\{k \in {\mathbb N} :
d_{Y}\bigl(x_{k},y_{k}\bigr)< \sigma \bigr\}\Bigr)=0,
\\
& \underline{d}\Bigl(\bigl\{k \in {\mathbb N} : d_{Y}\bigl(x_{k},y_{k}\bigr)
\geq \epsilon \bigr\}\Bigr)=0,
\end{split}
\end{align}
\begin{align}\label{jednacinaj2}
\begin{split}
& \underline{d}\Bigl( \bigl\{k \in {\mathbb N} :
d_{Y}\bigl(x_{k},y_{k}\bigr)< \sigma \bigr\}\Bigr)=0,
\\
& \underline{Bd}\Bigl(\bigl\{k \in {\mathbb N} : d_{Y}\bigl(x_{k},y_{k}\bigr)
\geq \epsilon \bigr\}\Bigr)=0
\end{split}
\end{align}
and
\begin{align}\label{jednacina}
\begin{split}
& \underline{d}\Bigl( \bigl\{k \in {\mathbb N} :
d_{Y}\bigl(x_{k},y_{k}\bigr)< \sigma \bigr\}\Bigr)=0,
\\
& \underline{d}\Bigl(\bigl\{k \in {\mathbb N} : d_{Y}\bigl(x_{k},y_{k}\bigr)
\geq \epsilon \bigr\}\Bigr)=0.
\end{split}
\end{align}

In the following definition, we introduce the notion of reiterative distributional chaos (reiterative distributional chaos of type $1$ or $2$):

\begin{defn}\label{DC-unbounded-fric} 
Suppose that, for every $k\in {\mathbb N},$ $\rho_{k} \subseteq X \times Y$ is a binary relation and $\tilde{X}$ is a non-empty
subset of $X.$ If there exist an uncountable
set $S\subseteq \bigcap_{k=1}^{\infty} D(\rho_{k}) \cap \tilde{X}$ and
$\sigma>0$ such that for each $\epsilon>0$ and for each pair $x,\
y\in S$ of distinct points we have that for each $k\in {\mathbb N}$ there exist elements $x_{k}\in \rho_{k}x$ and $y_{k} \in \rho_{k}y$ such that \eqref{jednacina} holds, resp. \eqref{jednacinaj} [\eqref{jednacinaj1}/\eqref{jednacinaj2}] holds,
then we say that the sequence $(\rho_{k})_{k\in
{\mathbb N}}$ is
$\tilde{X}$-distributionally chaotic, resp. $\tilde{X}$-reiteratively distributionally chaotic [$\tilde{X}$-reiteratively distributionally chaotic of type $1$/$\tilde{X}$-reiteratively distributionally chaotic of type $2$].

The sequence $(\rho_{k})_{k\in
{\mathbb N}}$ is said to be densely\index{$\tilde{X}$-distributionally chaotic sequence of operators!densely}
$\tilde{X}$-distributionally chaotic, resp. $\tilde{X}$-reiteratively distributionally chaotic [$\tilde{X}$-reiteratively distributionally chaotic of type $1$/$\tilde{X}$-reiteratively distributionally chaotic of type $2$], iff $S$ can be chosen to be dense in $\tilde{X}.$
A binary relation $\rho \subseteq X \times X$ is said to be (densely)
$\tilde{X}$-distributionally chaotic, resp. $\tilde{X}$-reiteratively distributionally chaotic [$\tilde{X}$-reiteratively distributionally chaotic of type $1$/$\tilde{X}$-reiteratively distributionally chaotic of type $2$], iff the sequence $(\rho_{k}\equiv
\rho^{k})_{k\in {\mathbb N}}$ is.
The set $S$ is said to be $\sigma_{\tilde{X}}$-scrambled set, resp. $\sigma_{\tilde{X}}$-reiteratively scrambled set [$\sigma_{\tilde{X}}$-reiteratively scrambled set of type $1$/$\sigma_{\tilde{X}}$-reiteratively scrambled set of type $2$] ($\sigma$-scrambled set, resp. $\sigma$-reiteratively scrambled set [$\sigma$-reiteratively scrambled set of type $1$/$\sigma$-reiteratively scrambled set of type $2$], in the case that $\tilde{X}=X$)     
of the sequence $(\rho_{k})_{k\in {\mathbb N}}$ (the
binary relation $\rho$);  in the case that
$\tilde{X}=X,$ then we also say that the sequence $(\rho_{k})_{k\in
{\mathbb N}}$ (the binary relation $\rho$) is distributionally chaotic, resp. reiteratively distributionally chaotic [reiteratively distributionally chaotic of type $1$/reiteratively distributionally chaotic of type $2$].
\end{defn}

It is well known that,
for any infinite set $A\subseteq {\mathbb N}$, being syndetic and having a positive Banach density is the same thing. Therefore, if the sets $\{k \in {\mathbb N} :
d_{Y}(x_{k},y_{k})< \sigma \}$ and $\{k \in {\mathbb N} : d_{Y}(x_{k},y_{k})
\geq \epsilon \}$ are infinite, then they have unbounded difference sets iff \eqref{jednacinaj} holds. If one of these sets is finite, say the first one, then there exists $k_{0}=k_{0}(\sigma)\in {\mathbb N}$ such that $[k_{0},\infty) \subseteq \{k\in {\mathbb N} : d_{Y}(x_{k},y_{k})
\geq \epsilon\}$ and the second equality in \eqref{jednacinaj} cannot be satisfied. Therefore, in definition of $\tilde{X}$-reiterative distributional chaos, we can equivalently replace the equation \eqref{jednacinaj} with the statements that the difference sets of $\{k \in {\mathbb N} :
d_{Y}(x_{k},y_{k})< \sigma \}$ and $\{k \in {\mathbb N} : d_{Y}(x_{k},y_{k})
\geq \epsilon \}$ are unbounded.

The following definition seems to be new even for the sequences of linear not continuous operators on Banach and Fr\'echet spaces as well as for the sequences of linear continuous operators that are not orbits of one single operator:

\begin{defn}\label{Li-Yorke}
Suppose that, for every $k\in {\mathbb N},$ $\rho_{k} \subseteq X \times Y$ is a binary relation and $\tilde{X}$ is a non-empty subset of $X.$ If there exist an uncountable
set $S\subseteq \bigcap_{k=1}^{\infty} D(\rho_{k}) \cap \tilde{X}$ such that for each pair $x,\
y\in S$ of distinct points and for each $k\in {\mathbb N}$ there exist elements $x_{k}\in \rho_{k}x$ and $y_{k} \in \rho_{k}y$ such that 
\begin{align}\label{kxc}
\liminf_{k\rightarrow\infty}d_{Y}\bigl( x_{k},y_{k} \bigr)=0\mbox{  and  }\limsup_{k\rightarrow\infty}d_{Y}\bigl( x_{k},y_{k} \bigr)>0,
\end{align}
then we say that the sequence $(\rho_{k})_{k\in
{\mathbb N}}$ is
$\tilde{X}$-Li-Yorke chaotic.

The sequence $(\rho_{k})_{k\in
{\mathbb N}}$ is said to be densely\index{$\tilde{X}$-distributionally chaotic sequence of operators!densely}
$\tilde{X}$-Li-Yorke chaotic iff $S$ can be chosen to be dense in $\tilde{X}.$
A binary relation $\rho \subseteq X \times X$ is said to be (densely)
$\tilde{X}$-Li-Yorke chaotic iff the sequence $(\rho_{k}\equiv
\rho^{k})_{k\in {\mathbb N}}$ is.
The set $S$ is said to be $\tilde{X}$-scrambled Li-Yorke set (scrambled Li-Yorke set, in the case that $\tilde{X}=X$)     
of the sequence $(\rho_{k})_{k\in {\mathbb N}}$ (the
binary relation $\rho$);  in the case that
$\tilde{X}=X,$ then we also say that the sequence $(\rho_{k})_{k\in
{\mathbb N}}$ (the binary relation $\rho$) is (densely) Li-Yorke chaotic.
\end{defn}

Besides the conditions introduced so far, we can also examine the following ones:
\begin{align}\label{jednacinaq1}
\begin{split}
\underline{Bd}\Bigl( \bigl\{k \in {\mathbb N} :
d_{Y}\bigl(x_{k},y_{k}\bigr)< \sigma \bigr\}\Bigr)=0
\mbox{ and } \liminf_{k \rightarrow \infty} d_{Y}\bigl(x_{k},y_{k}\bigr)
=0,
\end{split}
\end{align}
\begin{align}\label{jednacinaq2}
\begin{split}
& \underline{d}\Bigl( \bigl\{k \in {\mathbb N} :
d_{Y}\bigl(x_{k},y_{k}\bigr)< \sigma \bigr\}\Bigr)=0\mbox{ and }\liminf_{k \rightarrow \infty} d_{Y}\bigl(x_{k},y_{k}\bigr)
=0,
\end{split}
\end{align}
\begin{align}\label{jednacinaq5}
\begin{split}
\limsup_{k\rightarrow\infty}d_{Y}\bigl( x_{k},y_{k} \bigr)>0
\mbox{ and }\underline{Bd}\Bigl( \bigl\{k \in {\mathbb N} :
d_{Y}\bigl(x_{k},y_{k}\bigr) \geq \epsilon \bigr\}\Bigr)=0,
\end{split}
\end{align}
\begin{align}\label{jednacinaq6}
\begin{split}
\limsup_{k\rightarrow\infty}d_{Y}\bigl( x_{k},y_{k} \bigr)>0
\mbox{ and }\underline{d}\Bigl( \bigl\{k \in {\mathbb N} :
d_{Y}\bigl(x_{k},y_{k}\bigr) \geq \epsilon \bigr\}\Bigr)=0.
\end{split}
\end{align}

The following definition is meaningful, as well:

\begin{defn}\label{DC-unbounded-fric-sorry} 
Suppose that, for every $k\in {\mathbb N},$ $\rho_{k} \subseteq X \times Y$ is a binary relation and $\tilde{X}$ is a non-empty
subset of $X.$ If there exist an uncountable
set $S\subseteq \bigcap_{k=1}^{\infty} D(\rho_{k}) \cap \tilde{X}$ and
$\sigma>0$ such that for each $\epsilon>0$ and for each pair $x,\
y\in S$ of distinct points we have that for each $k\in {\mathbb N}$ there exist elements $x_{k}\in \rho_{k}x$ and $y_{k} \in \rho_{k}y$ such that \eqref{jednacinaq1}/\eqref{jednacinaq6} holds,
then we say that the sequence $(\rho_{k})_{k\in
{\mathbb N}}$ is 
$(\tilde{X},1)$-mixed chaotic/$(\tilde{X},4)$-mixed chaotic.

The notion of densely $(\tilde{X},i)$-mixed chaotic sequence $(\rho_{k})_{k\in
{\mathbb N}}$ (the binary relation $\rho$), where $i\in {\mathbb N}_{4},$ the corresponding
$(\sigma_{\tilde{X}},i) $-mixed scrambled set
(($\sigma,i)$-mixed scrambled set, in the case that $\tilde{X}=X$), where $i\in {\mathbb N}_{2},$ the corresponding
$(\tilde{X},i) $-mixed scrambled set
($i$-mixed scrambled set, in the case that $\tilde{X}=X$), where $i\in {\mathbb N}_{4} \setminus {\mathbb N}_{2},$
of the sequence $(\rho_{k})_{k\in {\mathbb N}}$ (the
binary relation $\rho$) is introduced as above;  in the case that
$\tilde{X}=X$ and $i\in {\mathbb N}_{4},$ then we also say that the sequence $(\rho_{k})_{k\in
{\mathbb N}}$ (the binary relation $\rho$) is $i$-mixed chaotic. 
\end{defn}

Keeping in mind \eqref{jebu} and \eqref{jebu2}, an elementary line of reasoning shows that 
any $\tilde{X}$-distributionally chaotic sequence (binary relation) is already $\tilde{X}$-reiteratively distributionally chaotic as well as that any $\tilde{X}$-reiteratively distributionally chaotic sequence (binary relation) is both $\tilde{X}$-reiteratively distributionally chaotic of type $1$ and $\tilde{X}$-reiteratively distributionally chaotic of type $2$. 
It is also predictable that any $\tilde{X}$-reiteratively distributionally chaotic sequence of binary relations (any $\tilde{X}$-reiteratively distributionally chaotic binary relation) needs to be $\tilde{X}$-Li-Yorke chaotic. To see this, suppose that the sequence $(\rho_{k})_{k\in
{\mathbb N}}$ is $\tilde{X}$-reiteratively distributionally chaotic with given $\sigma_{\tilde{X}}$-scrambled set $S.$ Let $x=y-z$ for some two different elements $y,\, z\in S.$ Due to our assumption, for each number $k\in {\mathbb N}$ it is very simple to construct 
two strictly increasing sequences $(s_{n,k})_{n\in {\mathbb N}}$
and $(l_{n,k})_{n\in {\mathbb N}}$ of pairwise disjoint positive integers such that $d_{Y}(x_{s_{n,k}},0)\geq \sigma$
and $d_{Y}(x_{l_{n,k}},0)\leq 1/k$ for all $n\in {\mathbb N}$ as well as $l_{k+1,k+1}>l_{k,k}+2^{k},$ $n_{k+1,k+1}>n_{k,k}+2^{k}$
for all $k\in {\mathbb N}$ and $s_{1,1}<l_{1,1}<s_{2,2}<l_{2,2}< \cdot \cdot \cdot ;$ here, $x_{s_{n,k}}\in \rho_{s_{n,k}}x$
and $x_{l_{n,k}}\in \rho_{l_{n,k}}x.$ If $n\notin \bigcup_{k\in {\mathbb N}}\{s_{k,k},\, l_{k,k}\},$ then we take any vector $x_{n}\in \rho_{n}x,$ which clearly exists because the set  $\bigcap_{k=1}^{\infty} D(\rho_{k}) \cap \tilde{X}$
is at least countable. If $n=s_{k,k}$  ($n=l_{k,k}$) for some $k\in {\mathbb N},$ then we set $x_{n}:=x_{s_{k,k}}$ ($x_{n}:=x_{l_{k,k}}$). Then it is clear that $\liminf_{n\rightarrow\infty}d_{Y}( x_{n},0)=0$ and $\limsup_{n\rightarrow\infty}d_{Y}( x_{n},0)>0$ because $\lim_{n\rightarrow \infty}d_{Y}(x_{l_{n,n}},0)=0$ and the subsequence $(d_{Y}(x_{s_{n,n}},0))$ of $(d_{Y}(x_{n},0))$ is bounded away from zero. On the other hand, it is clear that $(\tilde{X},i)$-mixed chaos implies $\tilde{X}$-Li-Yorke chaos for any $i\in {\mathbb N}_{4};$ the
above conclusions also hold for any kind of dense $\tilde{X}$-chaos considered above.

Therefore, we have proved the following proposition:

\begin{prop}\label{funda}
Let for each $k\in {\mathbb N}$ we have that $\rho_{k}  \subseteq X \times Y$ is a binary relation, and let $\tilde{X}$ be 
a non-empty subset of $X.$ Consider the following statements:
\begin{itemize}
\item[(i)] $(\rho_{k})_{k\in
{\mathbb N}}$ is (densely) $\tilde{X}$-distributionally chaotic.
\item[(ii)] $(\rho_{k})_{k\in
{\mathbb N}}$ is (densely) reiteratively $\tilde{X}$-distributionally chaotic of type $1$.
\item[(iii)] $ (\rho_{k})_{k\in
{\mathbb N}}$ is (densely) reiteratively $\tilde{X}$-distributionally chaotic of type $2$.
\item[(iv)] $(\rho_{k})_{k\in
{\mathbb N}}$ is (densely) reiteratively $\tilde{X}$-distributionally chaotic.
\item[(v)] $(\rho_{k})_{k\in
{\mathbb N}}$ is (densely) $ (\tilde{X},1)$-mixed chaotic.
\item[(vi)] $(\rho_{k})_{k\in
{\mathbb N}}$ is (densely) $ (\tilde{X},2)$-mixed chaotic.
\item[(vii)] $(\rho_{k})_{k\in
{\mathbb N}}$ is (densely) $ (\tilde{X},3)$-mixed chaotic.
\item[(viii)] $(\rho_{k})_{k\in
{\mathbb N}}$ is (densely) $ (\tilde{X},4)$-mixed chaotic.
\item[(ix)] $(\rho_{k})_{k\in
{\mathbb N}}$ is (densely) 
$\tilde{X}$-Li-Yorke chaotic.
\end{itemize}
Then \emph{(i)} implies \emph{(ii)}-\emph{(ix)}; \emph{(ii)} 
implies \emph{(iv)}-\emph{(v)}, \emph{(vii)}-\emph{(viii)}  and \emph{(ix)}; \emph{(iii)} implies \emph{(iv)}-\emph{(vii)}  and \emph{(ix)};
\emph{(iv)} implies \emph{(v)}, \emph{(vii)} and \emph{(ix)}; \emph{(v)}, \emph{(vii)} or \emph{(viii)} implies \emph{(ix)};
\emph{(vi)} implies \emph{(v)} and \emph{(ix)}.
\end{prop}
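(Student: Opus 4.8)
All nine notions are instances of one template: the sequence $(\rho_{k})_{k\in{\mathbb N}}$ has the property in question iff there are an uncountable set $S\subseteq\bigcap_{k\in{\mathbb N}}D(\rho_{k})\cap\tilde{X}$ and a number $\sigma>0$ such that, for every $\epsilon>0$ and every pair of distinct points $x,y\in S$, one can pick $x_{k}\in\rho_{k}x$ and $y_{k}\in\rho_{k}y$ ($k\in{\mathbb N}$) for which the scalar sequence $a_{k}:=d_{Y}(x_{k},y_{k})$ satisfies the condition attached to that notion among \eqref{jednacina}, \eqref{jednacinaj}, \eqref{jednacinaj1}, \eqref{jednacinaj2}, \eqref{jednacinaq1}, \eqref{jednacinaq2}, \eqref{jednacinaq5}, \eqref{jednacinaq6}, \eqref{kxc}. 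I would therefore keep the \emph{same} $S$ and the \emph{same} $\sigma$ throughout --- which also disposes of all the ``densely'' variants at once, since a dense $S$ stays dense --- and reduce every arrow to an implication between conditions on the single real sequence $(a_{k})$. The choices $x_{k},y_{k}$ may a priori depend on $\epsilon$, and harmonizing this is the only genuine subtlety.

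The arrows (i)$\Rightarrow$(ii),(iii),(iv),(vii),(viii); (ii)$\Rightarrow$(iv),(vii),(viii); (iii)$\Rightarrow$(iv),(vii); (iv)$\Rightarrow$(vii); (v)$\Rightarrow$(ix); and (vi)$\Rightarrow$(v),(ix) need no change of $(x_{k}),(y_{k})$: for each $\epsilon$ the $\epsilon$-witness of the source already witnesses the target. Indeed, \eqref{jebu} gives $\underline{d}(A)=0\Rightarrow\underline{Bd}(A)=0$, and \eqref{jebu1}--\eqref{jebu2} give that $\underline{d}(A)=0$ means $\overline{d}(A^{c})=1$, hence $\overline{Bd}(A^{c})=1$, i.e. $A^{c}$ is thick and in particular infinite. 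With $A=\{k\in{\mathbb N}:a_{k}<\sigma\}$ this yields: whenever $\underline{Bd}(\{k:a_{k}<\sigma\})=0$ (a fortiori whenever $\underline{d}(\{k:a_{k}<\sigma\})=0$), there are infinitely many $k$ with $a_{k}\geq\sigma$, so $\limsup_{k}a_{k}\geq\sigma>0$; this is exactly the ``syndetic $\Leftrightarrow$ positive Banach density'' observation recalled before the proposition, and it supplies the $\limsup$-clauses of \eqref{jednacinaq5}, \eqref{jednacinaq6}, \eqref{kxc}. Similarly $\underline{d}(\{k:a_{k}\geq\epsilon\})=0$ (or the stronger $\underline{Bd}$-version) forces $\{k:a_{k}<\epsilon\}$ to be infinite. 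Matching these facts against the definitions of \eqref{jednacina}--\eqref{kxc} makes each of the listed arrows immediate.

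The remaining arrows are precisely those whose target requires $\liminf_{k}a_{k}=0$: (i),(ii),(iii),(iv),(vii),(viii)$\Rightarrow$(ix), together with (i),(ii),(iii),(iv)$\Rightarrow$(v) and (i),(iii)$\Rightarrow$(vi). For these I would reproduce the diagonal construction already carried out in the text for (iv)$\Rightarrow$(ix). Fixing distinct $x,y\in S$, start from the choice $(x_{k}),(y_{k})$ attached to $\epsilon=1$ and overwrite it on a set $L=\{l_{1}<l_{2}<\cdots\}$ of indices with super-linear gaps (say $l_{m+1}>l_{m}+2^{m}$), putting at $l_{m}$ the $\epsilon=1/m$-choice at some index $>l_{m-1}$ where the corresponding distance is $<1/m$ (possible since $\{k:a_{k}<1/m\}$ is infinite by the previous paragraph). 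The modified sequence has $\liminf_{k}a_{k}=0$, and it still meets the $\{k:a_{k}<\sigma\}$-clause of the target: $L$ is so sparse that $\overline{Bd}(L)=0$ and $L$ meets each window $[n+1,n+s]$ in $O(\log s)$ points, hence overwriting on $L$ does not destroy thickness of $\{k:a_{k}\geq\sigma\}$ (so $\underline{Bd}(\{k:a_{k}<\sigma\})=0$ is kept), while $\underline{d}(B\cup L)\leq\underline{d}(B)+\overline{d}(L)=\underline{d}(B)$ keeps $\underline{d}(\{k:a_{k}<\sigma\})=0$ when that is what is demanded (which happens only from the sources (i),(iii)); for the arrows into (ix) the clause $\limsup_{k}a_{k}>0$ persists because infinitely many indices realizing it lie outside the sparse set $L$.

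The step I expect to be the real obstacle is this last verification: choosing $L$ thin enough --- relative both to its upper Banach density and to its intersections with arbitrarily long windows --- that overwriting on it preserves every density-zero and thickness condition already in force, and simultaneously fusing the possibly $\epsilon$-dependent source choices of $x_{k},y_{k}$ into one sequence. Everything else is a bookkeeping-free application of \eqref{jebu}, \eqref{jebu1} and \eqref{jebu2}.
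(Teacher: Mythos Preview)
Your approach is correct and is essentially the paper's own: the direct implications are read off from the density comparisons \eqref{jebu}--\eqref{jebu2}, and the arrows whose target contains $\liminf_{k}d_{Y}(x_{k},y_{k})=0$ are obtained by a diagonal overwrite on a super-sparse index set. The paper carries out that diagonal argument only for (iv)$\Rightarrow$(ix) and declares the remaining $(\tilde X,i)$-mixed cases ``clear''; you are more explicit in treating the arrows into (v) and (vi), checking that $\overline{Bd}(L)=\overline d(L)=0$ suffices to preserve the relevant $\underline{Bd}$- or $\underline d$-zero condition on $\{k:a_{k}<\sigma\}$ after modification. One cosmetic difference worth noting: the paper's construction for (iv)$\Rightarrow$(ix) is written with $x=y-z$ and $d_{Y}(x_{k},0)$, i.e.\ in linear notation, whereas your version stays in the metric-space setting of the statement; in that respect your write-up is actually cleaner.
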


It is worth noting that any two different types of chaos considered above, as well as in Definition \ref{DC-unbounded-frick} and Definition \ref{DC-unbounded-fric-sorry} below, do not coincide even for the sequences of continuous linear operators on finite-dimensional spaces. This can be inspected as in Example \ref{primeran} below.

\section{Distributional chaos of type $s$ for binary relations ($s\in \{1,2,2\frac{1}{2},3\}$)}\label{marek-MLOsk}

As in the previous one, in this section we assume that $(X,d)$ and  $(Y,d_{Y})$ are metric spaces.
Suppose that $\sigma,\ \sigma'>0$, $\epsilon>0$ and $(x_{k})_{k\in {\mathbb N}},\ (y_{k})_{k\in {\mathbb N}}$ are two given sequences in $Y.$
Consider the following conditions:
\begin{align}\label{jednacinak11}
\begin{split}
& \overline{Bd}\Bigl( \bigl\{k \in {\mathbb N} :
d_{Y}\bigl(x_{k},y_{k}\bigr)\geq \sigma \bigr\}\Bigr)>0,
\\
& \underline{Bd}\Bigl(\bigl\{k \in {\mathbb N} : d_{Y}\bigl(x_{k},y_{k}\bigr)
\geq \epsilon \bigr\}\Bigr)=0;
\end{split}
\end{align}
\begin{align}\label{jednacinak12}
\begin{split}
& \overline{Bd}\Bigl( \bigl\{k \in {\mathbb N} :
d_{Y}\bigl(x_{k},y_{k}\bigr)\geq \sigma \bigr\}\Bigr)>0,
\\
& \underline{d}\Bigl(\bigl\{k \in {\mathbb N} : d_{Y}\bigl(x_{k},y_{k}\bigr)
\geq \epsilon \bigr\}\Bigr)=0;
\end{split}
\end{align}
\begin{align}\label{jednacinak13}
\begin{split}
& \overline{d}\Bigl( \bigl\{k \in {\mathbb N} :
d_{Y}\bigl(x_{k},y_{k}\bigr)\geq \sigma \bigr\}\Bigr)>0,
\\
& \underline{Bd}\Bigl(\bigl\{k \in {\mathbb N} : d_{Y}\bigl(x_{k},y_{k}\bigr)
\geq \epsilon \bigr\}\Bigr)=0;
\end{split}
\end{align}
\begin{align}\label{jednacinak14}
\begin{split}
& \overline{d}\Bigl( \bigl\{k \in {\mathbb N} :
d_{Y}\bigl(x_{k},y_{k}\bigr)\geq \sigma \bigr\}\Bigr)>0,
\\
& \underline{d}\Bigl(\bigl\{k \in {\mathbb N} : d_{Y}\bigl(x_{k},y_{k}\bigr)
\geq \epsilon \bigr\}\Bigr)=0;
\end{split}
\end{align}
there exist $c >0$ and $r>0$ such that 
\begin{align}\label{jednacinak21}
\underline{Bd}\Bigl( \bigl\{k \in {\mathbb N} :
d_{Y}\bigl(x_{k},y_{k}\bigr)< \sigma  \bigr\}\Bigr)<c<\overline{Bd}\Bigl( \bigl\{k \in {\mathbb N} :
d_{Y}\bigl(x_{k},y_{k}\bigr)< \sigma\bigr\}\Bigr)
\end{align}
for $0<\sigma<r;$\\
there exist $c >0$ and $r>0$ such that 
\begin{align}\label{jednacinak22}
\underline{Bd}\Bigl( \bigl\{k \in {\mathbb N} :
d_{Y}\bigl(x_{k},y_{k}\bigr)< \sigma  \bigr\}\Bigr)<c<\overline{d}\Bigl( \bigl\{k \in {\mathbb N} :
d_{Y}\bigl(x_{k},y_{k}\bigr)< \sigma\bigr\}\Bigr)
\end{align}
for $0<\sigma<r;$\\
there exist $c >0$ and $r>0$ such that 
\begin{align}\label{jednacinak23}
\underline{d}\Bigl( \bigl\{k \in {\mathbb N} :
d_{Y}\bigl(x_{k},y_{k}\bigr)< \sigma  \bigr\}\Bigr)<c<\overline{Bd}\Bigl( \bigl\{k \in {\mathbb N} :
d_{Y}\bigl(x_{k},y_{k}\bigr)< \sigma\bigr\}\Bigr)
\end{align}
for $0<\sigma<r;$\\
there exist $c >0$ and $r>0$ such that 
\begin{align}\label{jednacinak24}
\underline{d}\Bigl( \bigl\{k \in {\mathbb N} :
d_{Y}\bigl(x_{k},y_{k}\bigr)< \sigma  \bigr\}\Bigr)<c<\overline{d}\Bigl( \bigl\{k \in {\mathbb N} :
d_{Y}\bigl(x_{k},y_{k}\bigr)< \sigma\bigr\}\Bigr)
\end{align}
for $0<\sigma<r;$
\begin{align}\label{jednacinak31}
\mbox{there exist }a,\ b,\ c >0 \mbox{ such that }\eqref{jednacinak21} \mbox{ holds for all }\sigma \in [a,b];
\end{align}
\begin{align}\label{jednacinak32}
\mbox{there exist }a,\ b,\ c >0 \mbox{ such that }\eqref{jednacinak22} \mbox{ holds for all }\sigma \in [a,b];
\end{align}
\begin{align}\label{jednacinak33}
\mbox{there exist }a,\ b,\ c >0 \mbox{ such that }\eqref{jednacinak23} \mbox{ holds for all }\sigma \in [a,b];
\end{align}
\begin{align}\label{jednacinak34}
\mbox{there exist }a,\ b,\ c >0 \mbox{ such that }\eqref{jednacinak24} \mbox{ holds for all }\sigma \in [a,b].
\end{align}

Let $i\in \{1,2\}.$ For (reiterative) distributional chaos (reiterative distributional chaos of type $i$) it is also said that it is (reiterative) distributional chaos of type $0;1$ (reiterative distributional chaos of type $i;1$). 
Now we would like to propose the following notion:

\begin{defn}\label{DC-unbounded-frick} 
Let $i\in \{0,1,2\}$ and $s\in \{1,2,2\frac{1}{2},3\}.$
Suppose that, for every $k\in {\mathbb N},$ $\rho_{k} \subseteq X \times Y$ is a binary relation and $\tilde{X}$ is a non-empty subset of $X.$ 
\begin{itemize}
\item[(i)] If there exist an uncountable
set $S\subseteq \bigcap_{k=1}^{\infty} D(\rho_{k}) \cap \tilde{X}$ and
$\sigma>0$ such that for each $\epsilon>0$ and for each pair $x,\
y\in S$ of distinct points we have that for each $k\in {\mathbb N}$ there exist elements $x_{k}\in \rho_{k}x$ and $y_{k} \in \rho_{k}y$ such that \eqref{jednacinak11}
[\eqref{jednacinak12}/\eqref{jednacinak13}/\eqref{jednacinak14}] holds, then we say that the sequence $(\rho_{k})_{k\in
{\mathbb N}}$ is reiteratively
$\tilde{X}$-distributionally chaotic of type $0;2$ [reiteratively
$\tilde{X}$-distributionally chaotic of type $1;2$/reiteratively
$\tilde{X}$-distributionally chaotic of type $2;2$/$\tilde{X}$-distributionally chaotic of type $0;2$].
\item[(ii)] If there exist an uncountable
set $S\subseteq \bigcap_{k=1}^{\infty} D(\rho_{k}) \cap \tilde{X}$ and numbers
$\sigma,\ c, \  r>0$ such that for each $\epsilon>0$ and for each pair $x,\
y\in S$ of distinct points we have that for each $k\in {\mathbb N}$ there exist elements $x_{k}\in \rho_{k}x$ and $y_{k} \in \rho_{k}y$ such that \eqref{jednacinak21}
[\eqref{jednacinak22}/\eqref{jednacinak23}/\eqref{jednacinak24}] holds for $0<\sigma<r$, then we say that the sequence $(\rho_{k})_{k\in
{\mathbb N}}$ is reiteratively $\tilde{X}$-distributionally chaotic of type $2\frac{1}{2}$ [reiteratively
$\tilde{X}$-distributionally chaotic of type $1;2\frac{1}{2}$/reiteratively
$\tilde{X}$-distributionally chaotic of type $2;2\frac{1}{2}$/$\tilde{X}$-distributionally chaotic of type $2\frac{1}{2}$].
\item[(iii)] If there exist an uncountable
set $S\subseteq \bigcap_{k=1}^{\infty} D(\rho_{k}) \cap \tilde{X}$ and real numbers
$\sigma,\ a,\ b,\ c>0$ such that for each $\epsilon>0$ and for each pair $x,\
y\in S$ of distinct points we have that for each $k\in {\mathbb N}$ there exist elements $x_{k}\in \rho_{k}x$ and $y_{k} \in \rho_{k}y$ such that \eqref{jednacinak31}
[\eqref{jednacinak32}/\eqref{jednacinak33}/\eqref{jednacinak34}] holds for $a<\sigma<b$, then we say that the sequence $(\rho_{k})_{k\in
{\mathbb N}}$ is reiteratively $\tilde{X}$-distributionally chaotic of type $3$ [reiteratively
$\tilde{X}$-distributionally chaotic of type $1;3$/reiteratively
$\tilde{X}$-distributionally chaotic of type $2;3$/$\tilde{X}$-distributionally chaotic of type $3$].
\end{itemize}
The sequence $(\rho_{k})_{k\in
{\mathbb N}}$ is said to be densely (reiteratively)
$\tilde{X}$-distributionally chaotic of type $i;s$ iff $S$ can be chosen to be dense in $\tilde{X}.$
A binary relation $\rho \subseteq X \times X$ is said to be (densely) reiteratively
$\tilde{X}$-distributionally chaotic of type $i;s$ iff
the sequence $(\rho_{k}\equiv
\rho^{k})_{k\in {\mathbb N}}$ is.
The set $S$ is said to be (reiteratively)
$(\sigma_{\tilde{X}},s)$-scrambled set ((reiteratively) $(\sigma,s)$-scrambled set,  in the case that $\tilde{X}=X$)     
of the sequence $(\rho_{k})_{k\in {\mathbb N}}$ (the
binary relation $\rho$);  in the case that
$\tilde{X}=X,$ then we also say that the sequence $(\rho_{k})_{k\in
{\mathbb N}}$ (the binary relation $\rho$) is  densely (reiteratively)
distributionally chaotic of type $i;s$.
\end{defn}

Keeping in mind the inequality \eqref{jebu}, we are in a position to immediately clarify the following:

\begin{prop}\label{mace}
Suppose that $i\in \{0,1,2\},$ $s,\ s_{1},\ s_{2} \in \{1,2,2\frac{1}{2},3\},$ $s_{1}\leq s_{2},$ $\tilde{X}$ is a non-empty subset of $X$ and $(\rho_{k})_{k\in
{\mathbb N}}$ is a given sequence of binary relations. Then, for $(\rho_{k})_{k\in
{\mathbb N}},$ we have the following:
\begin{center}
(dense, reiterative) 
$\tilde{X}$-distributional chaos of type $i;s_{1}$ implies (dense, reiterative) $\tilde{X}$-distributional chaos of type $i;s_{2}$,
\end{center}
\begin{center}
(dense)
$\tilde{X}$-distributional chaos of type $0;s$ implies (dense) reiterative $\tilde{X}$-distributional chaos of types $1;s$ and $2;s$ 
\end{center}
and
\begin{center}
(dense) reiterative $\tilde{X}$-distributional chaos of type $1;s$ or  $2;s$ implies (dense) reiterative $\tilde{X}$-distributional chaos of type $0;s.$ 
\end{center}
\end{prop}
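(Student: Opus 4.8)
The plan is to prove each implication with one and the same uncountable set. If $S\subseteq\bigcap_{k\ge1}D(\rho_k)\cap\tilde X$, together with $\sigma>0$ (and with $c,r>0$, resp.\ $a,b,c>0$, when $s\in\{2\frac{1}{2},3\}$), witnesses the stronger notion, then for the weaker one I keep $S$, keep --- for each pair $x,y\in S$ of distinct points and each $k\in{\mathbb N}$ --- the same $x_k\in\rho_kx$, $y_k\in\rho_ky$, and only readjust the auxiliary constants. So everything reduces to arithmetic comparisons of the defining density inequalities, the only tools being the chain \eqref{jebu}, the complementation identities \eqref{jebu1}--\eqref{jebu2}, and the monotonicity of $\underline d,\overline d,\underline{Bd},\overline{Bd}$ under inclusion; the ``dense'' variants and the passage $\rho_k\equiv\rho^k$ then come for free. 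Throughout, for a fixed pair and a fixed choice of intermediate points write $A^{<}_\tau:=\{k\in{\mathbb N}:d_{Y}(x_k,y_k)<\tau\}$ and $A^{\ge}_\tau:=\{k\in{\mathbb N}:d_{Y}(x_k,y_k)\ge\tau\}=(A^{<}_\tau)^{c}$.

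For the first implication it suffices to treat the three consecutive steps $1\to2$, $2\to2\frac{1}{2}$ and $2\frac{1}{2}\to3$, with the index $i$ (the Banach-versus-ordinary label on each of the two clauses) held fixed. In step $1\to2$ the first line of \eqref{jednacina} (resp.\ \eqref{jednacinaj}/\eqref{jednacinaj1}/\eqref{jednacinaj2}) asserts $\underline d(A^{<}_\sigma)=0$ or $\underline{Bd}(A^{<}_\sigma)=0$; by \eqref{jebu1} or \eqref{jebu2} this yields $\overline d(A^{\ge}_\sigma)=1$ or $\overline{Bd}(A^{\ge}_\sigma)=1$, in particular positive, which is the first line of \eqref{jednacinak14} (resp.\ \eqref{jednacinak11}/\eqref{jednacinak12}/\eqref{jednacinak13}), and the second lines are identical. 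In step $2\to2\frac{1}{2}$ the first line of \eqref{jednacinak11} (resp.\ \eqref{jednacinak12}/\eqref{jednacinak13}/\eqref{jednacinak14}) together with \eqref{jebu1}/\eqref{jebu2} shows that the lower-side quantity of the target gap \eqref{jednacinak21} (resp.\ \eqref{jednacinak22}/\eqref{jednacinak23}/\eqref{jednacinak24}), namely $\underline{Bd}(A^{<}_\sigma)$ or $\underline d(A^{<}_\sigma)$, is strictly less than $1$, while the second line, used at every small $\epsilon$, shows that the upper-side quantity, $\overline{Bd}(A^{<}_\tau)$ or $\overline d(A^{<}_\tau)$, equals $1$ for every $\tau>0$; since that lower-side quantity is non-decreasing in its subscript, putting $r:=\sigma$ and taking $c$ strictly between its value at $\sigma$ and $1$ makes \eqref{jednacinak21} (resp.\ \eqref{jednacinak22}/\eqref{jednacinak23}/\eqref{jednacinak24}) hold for $0<\sigma<r$. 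Finally, in step $2\frac{1}{2}\to3$ what holds for $0<\sigma<r$ holds a fortiori for $\sigma$ in any compact interval $[a,b]\subset(0,r)$, which is \eqref{jednacinak31} (resp.\ \eqref{jednacinak32}/\eqref{jednacinak33}/\eqref{jednacinak34}).

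For the second and third implications $s$ is fixed, only one of the two clauses changes its label, and the implication between the old and the new clause is immediate from the appropriate inequality of \eqref{jebu}: one uses $\underline{Bd}\le\underline d$ when a lower (Banach) density or a ``$=0$'' statement is downgraded, and $\overline d\le\overline{Bd}$ when an upper density or a ``$>0$'' statement is downgraded, and no constant changes. Concretely: from type $0;s$ to reiterative type $1;s$ one downgrades the $\sigma$-clause ($\underline d(A^{<}_\sigma)=0\mapsto\underline{Bd}(A^{<}_\sigma)=0$, or $\overline d(A^{\ge}_\sigma)>0\mapsto\overline{Bd}(A^{\ge}_\sigma)>0$, or $\underline d(A^{<}_\sigma)<c\mapsto\underline{Bd}(A^{<}_\sigma)<c$), from type $0;s$ to reiterative type $2;s$ one downgrades the $\epsilon$-clause ($\underline d(A^{\ge}_\epsilon)=0\mapsto\underline{Bd}(A^{\ge}_\epsilon)=0$, or $c<\overline d(A^{<}_\sigma)\mapsto c<\overline{Bd}(A^{<}_\sigma)$), and reiterative type $1;s$ (resp.\ $2;s$) implies reiterative type $0;s$ by downgrading the still-``ordinary'' clause.

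The difficulty is concentrated in the step $2\to2\frac{1}{2}$. Producing the gap near $\sigma=0$ requires the second line of the type-$2$ condition at arbitrarily small $\epsilon$ \emph{simultaneously}, so --- since the intermediate points may a priori vary with $\epsilon$ --- one is led to read the type-$2$, type-$2\frac{1}{2}$ and type-$3$ hypotheses as each providing, for a fixed pair, a \emph{single} sequence $(x_k,y_k)_{k}$ with $\underline{Bd}(A^{\ge}_\epsilon)=0$ (resp.\ $\underline d(A^{\ge}_\epsilon)=0$) for all $\epsilon>0$, equivalently $\overline{Bd}(A^{<}_\tau)=1$ (resp.\ $\overline d(A^{<}_\tau)=1$) for all $\tau>0$; otherwise a single choice of points cannot cover the whole range $(0,r)$. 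Secondly, $c$ and $r$ are produced only after the pair has been fixed, since $\underline{Bd}(A^{<}_\sigma)$ depends on it, so the constants in \eqref{jednacinak21}--\eqref{jednacinak34} should be read as allowed to depend on the scrambled pair --- consistent with the ``there exist $c,r$'' phrasing already built into those conditions. Granting these two readings, every verification above is a routine application of \eqref{jebu}, \eqref{jebu1} and \eqref{jebu2}.
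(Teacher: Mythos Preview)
Your argument is correct and follows the same route the paper takes: the paper does not give a written proof but simply records that the proposition ``immediately'' follows from the chain \eqref{jebu}, and your verification is precisely a detailed unpacking of that remark, supplemented by the complementation identities \eqref{jebu1}--\eqref{jebu2} where needed.

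What you add beyond the paper is the honest identification of two soft spots in the step $2\to2\frac{1}{2}$: the possible $\epsilon$-dependence of the intermediate points $x_k,y_k$, and the uniformity (or not) of the constants $c,r$ over pairs in $S$. The paper's definitions are loosely quantified on exactly these points (note that $\epsilon$ does not even occur in \eqref{jednacinak21}--\eqref{jednacinak34}, and $\sigma$ is listed both as an existentially quantified constant and as a running variable in Definition~\ref{DC-unbounded-frick}(ii)--(iii)), so your caveat that the implication holds under the natural reading --- one sequence per pair, constants allowed to depend on the pair --- is a fair and useful observation rather than a defect in your proof. The paper does not address these issues at all; your treatment is in that sense more careful than the original.
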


As it is well known, $\tilde{X}$-distributional chaos of type $3$ is a very weak form of linear chaos: it is still unknown whether there exists a complex matrix that is distributionally chaotic of type $3$ (cf. \cite[Problem 51]{2018JMMA}). The same problem can be posed for reiterative distributional chaos of type $3.$ 

Concerning the relation of distributional chaos
and distributional chaos of type $2,$ it is worth noting that these two notions are equivalent for linear continuous operators on Banach spaces (see \cite[Theorem 2]{2018JMMA}). As mentioned at the end of previous section, this is far from being true for general sequences of linear continuous operators. 

A further analysis of distributional chaos of type $s$ and their generalizations will be carried out somewhere else.

\section{Distributional chaos and Li-Yorke chaos  in Fr\'echet spaces}\label{frechet-mlos}

In the remaining part of the paper, we assume that $X$ is an
infinite-dimensional Fr\' echet space over the field ${\mathbb K}\in \{{\mathbb R},\, {\mathbb C}\}$ and that the topology of $X$ is
induced by the fundamental system $(p_{n})_{n\in {\mathbb N}}$ of
increasing seminorms (separability of $X$ is not assumed a priori in future). Then the translation invariant metric\index{ translation invariant metric} $d :
X\times X \rightarrow [0,\infty),$ defined by
\begin{equation}\label{metri}
d(x,y):=\sum
\limits_{n=1}^{\infty}\frac{1}{2^{n}}\frac{p_{n}(x-y)}{1+p_{n}(x-y)},\quad
x,\ y\in X,
\end{equation}
enjoys the following properties:
$$
d(x+u,y+v)\leq d(x,y)+d(u,v),\quad x,\ y,\ u,\
v\in X,
$$
\begin{align*}
d(cx,cy)\leq (|c|+1)d(x,y),\ c\in {\mathbb K},\quad x,\ y\in X,
\end{align*}
and
\begin{align*}
d(\alpha x,\beta x)\geq \frac{|\alpha-\beta|}{1+|\alpha-\beta|}d(0,x),\quad x\in X,\ \alpha,\ \beta \in
{\mathbb K}.
\end{align*}
By $Y$ we denote another
Fr\' echet space over the same field of scalars as $X;$ the topology of $Y$ will be induced by the
fundamental system $(p_{n}^{Y})_{n\in {\mathbb N}}$ of increasing
seminorms.
Define the translation invariant metric $d_{Y} :
Y\times Y \rightarrow [0,\infty)$ by replacing $p_{n}(\cdot)$ with
$p_{n}^{Y}(\cdot)$ in (\ref{metri}).
If
$(X,\|\cdot \|)$ or $(Y,\|\cdot \|_{Y})$ is a Banach space, then we assume that the
distance of two elements $x,\ y\in X$ ($x,\ y\in Y$) is given by $d(x,y):=\|x-y\|$ ($d_{Y}(x,y):=\|x-y\|_{Y}$).
Keeping in mind this terminological change,
our structural results clarified in Fr\' echet spaces continue to hold in the case that $X$ or $Y$ is a Banach space. By $L(X,Y)$ we denote the space consisting of all linear continuous mappings from $X$ into $Y;$ $L(X)\equiv L(X,X).$

In Fr\'echet spaces, of importance are the following conditions:

\begin{align}\label{jednacinaq32}
\mbox{ the sequence }\bigl( x_{k}-y_{k} \bigr)_{k\in {\mathbb N}} \mbox{ is unbounded
and }\liminf_{k\rightarrow\infty}d_{Y}\bigl( x_{k},y_{k} \bigr)=0,
\end{align}
\begin{align}\label{jednacinaq33}
\begin{split}
\mbox{ the sequence }\bigl( x_{k}-y_{k} \bigr)_{k\in {\mathbb N}} \mbox{ is unbounded
and }\underline{Bd}\Bigl( \bigl\{k \in {\mathbb N} :
d_{Y}\bigl(x_{k},y_{k}\bigr) \geq \epsilon \bigr\}\Bigr)=0,
\end{split}
\end{align}
\begin{align}\label{jednacinaq34}
\begin{split}
\mbox{ the sequence }\bigl( x_{k}-y_{k} \bigr)_{k\in {\mathbb N}} \mbox{ is unbounded
and }\underline{d}\Bigl( \bigl\{k \in {\mathbb N} :
d_{Y}\bigl(x_{k},y_{k}\bigr) \geq \epsilon \bigr\}\Bigr)=0.
\end{split}
\end{align}

Albeit the most intriguing for multivalued linear operators, the following notion can be introduced for general binary relations, as well:

\begin{defn}\label{DC-unbounded-fric-sorry} 
Suppose that, for every $k\in {\mathbb N},$ $\rho_{k} \subseteq X \times Y$ is a binary relation and $\tilde{X}$ is a non-empty
subset of $X.$ If there exists an uncountable
set $S\subseteq \bigcap_{k=1}^{\infty} D(\rho_{k}) \cap \tilde{X}$ such that for each pair $x,\
y\in S$ of distinct points and for each $\epsilon>0$ we have that for each $k\in {\mathbb N}$ there exist elements $x_{k}\in \rho_{k}x$ and $y_{k} \in \rho_{k}y$ such that \eqref{jednacinaq32} holds, resp. \eqref{jednacinaq33} [\eqref{jednacinaq34}] holds,
then we say that the sequence $(\rho_{k})_{k\in
{\mathbb N}}$ is strongly $\tilde{X}$-Li-Yorke chaotic, resp. 
$\langle \tilde{X},1 \rangle$-mixed chaotic [$\langle \tilde{X},2 \rangle$-mixed chaotic].

The notion of densely strong $\tilde{X}$-Li-Yorke chaotic sequence, resp. densely $\langle \tilde{X},i \rangle$-mixed chaotic sequence $(\rho_{k})_{k\in
{\mathbb N}}$ (the binary relation $\rho$), where $i\in {\mathbb N}_{2},$ the corresponding strong $\tilde{X}$-Li-Yorke scrambled set, resp.
$\langle \tilde{X},i\rangle $-mixed scrambled set
(strong Li-Yorke scrambled set, resp. $\langle i \rangle$-mixed scrambled set, in the case that $\tilde{X}=X$), where $i\in {\mathbb N}_{2},$
of the sequence $(\rho_{k})_{k\in {\mathbb N}}$ (the
binary relation $\rho$) is introduced as above;  in the case that
$\tilde{X}=X$ and $i\in {\mathbb N}_{2},$ then we also say that the sequence $(\rho_{k})_{k\in
{\mathbb N}}$ (the binary relation $\rho$) is strong Li-Yorke chaotic, resp. $i$-mixed chaotic. 
\end{defn}

With the exception of implications clarified in Proposition \ref{funda}, we can only state the following ones, in general:
\begin{itemize}
\item[(A)] strong $\tilde{X}$-Li-Yorke chaos implies $\tilde{X}$-Li-Yorke chaos;
\item[(B)] $\langle \tilde{X},1 \rangle$-mixed chaos implies strong $\tilde{X}$-Li-Yorke chaos;
\item[(C)]  $\langle \tilde{X},2 \rangle$-mixed chaos implies  $\langle \tilde{X},1 \rangle$-mixed chaos and strong $\tilde{X}$-Li-Yorke chaos.
\end{itemize}

We continue by observing the following:
If $X$ is a Banach space and $T\in L(X),$ then $T$ is (densely) Li-Yorke chaotic iff $T$ is (densely) reiteratively distributionally chaotic.
To see this, let us recall that we have  $\|T\|>1$ due to the fact that
$T$ is Li-Yorke chaotic. 
By \cite[Theorem 5]{2011}, we have the existence of a vector $x\in X$ such that 
$\liminf_{n\rightarrow\infty}\| T^{n}x\|=0$ and $\limsup_{n\rightarrow\infty}\| T^{n}x\|=\infty.$ Therefore, there exist two strictly increasing sequences of positive integers $(n_{k})$ and $(l_{k})$ such that $\min(n_{k+1}-n_{k},l_{k+1}-l_{k})>k^{2},$ $\| T^{n_{k}}x\|<2^{-k^{2}}$ and $\| T^{l_{k}}x\|>2^{k^{2}}$ for all $k\in {\mathbb N}.$ Put $A:=\bigcup_{k\in {\mathbb N}}[n_{k},n_{k}+k]$ and
$B:=\bigcup_{k\in {\mathbb N}}[l_{k},l_{k}-k].$ Then $\overline{Bd}(A)=\overline{Bd}(B)=1$ and for each $n\in A$ ($n\in B$) there exists 
$k\in {\mathbb N}$ such that $n\in [n_{k},n_{k}+k]$ ($n\in [l_{k},l_{k}-k]$) and therefore $\|T^{n}x\|\leq (1+\|T\|)^{k}2^{-k^{2}}$
($\|T^{n}x\|\geq \|T\|^{-k}2^{k^{2}}$). This, in turn, implies that the notions of $1$-mixed chaos, $3$-mixed chaos, reiterative distributional chaos and Li-Yorke chaos coincide in this case (this also holds for dense analogues). 

On the other hand, as already mentioned, the situation is completely different for the sequences of continuous linear operators on finite-dimensional spaces. Take, for instance, $T_{k}=0$ if $k$ is even and $T_{k}=I$ if $k$ is odd. Then the sequence $(T_{k})$ is Li-Yorke chaotic on ony Fr\' echet space $X$ but it is not strongly Li-Yorke chaotic (this trivial counterexample also shows that the assertions of \cite[Theorem 5]{2011} and \cite[Theorem 9]{band}, where it has been proved that the notions of (dense) Li-Yorke chaos and (dense) strong Li-Yorke chaos coincide for the orbits of linear continuous operators, do not hold for the sequences of continuous linear operators on Banach and Fr\'echet function spaces). 

If $({\rho}_{k})_{k\in {\mathbb N}}$ and $\tilde{X}$ are given in advance, then we define the binary relations
${\mathbb \rho }_{k}' : D({\mathbb \rho }_{k}')\subseteq X \rightarrow Y$ by $D({\mathbb \rho }_{k}'):=D({\rho}_{k}) \cap \tilde{X}$ and
${\mathbb \rho }_{k}'x:={\rho}_{k}x,$ $x\in D({\mathbb \rho}_{k}')$ ($k\in {\mathbb N}$). We can simply prove the following proposition:

\begin{prop}\label{subspace-fric}
Let $i\in \{1,2\}.$ Then 
$({\rho}_{k})_{k\in {\mathbb N}}$ is (reiteratively) $\tilde{X}$-distributionally chaotic, resp. reiteratively $\tilde{X}$-distributionally chaotic of type $i$/(strong) $\tilde{X}$-Li-Yorke chaotic,
iff $({\mathbb \rho}_{k}')_{k\in {\mathbb N}}$ is (reiteratively) distributionally chaotic, resp. reiteratively distributionally chaotic of type $i$/(strong) Li-Yorke chaotic. The same holds for $\langle \tilde{X},i \rangle$-mixed chaos and $(\tilde{X},j)$-mixed chaos, where $j\in {\mathbb N}_{4}.$
\end{prop}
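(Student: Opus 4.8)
The plan is to unwind the definitions and observe that replacing $(\rho_{k})_{k\in{\mathbb N}}$ by $({\mathbb \rho}_{k}')_{k\in{\mathbb N}}$ alters neither the class of admissible scrambled sets nor the numerical conditions imposed on their pairs of points. First I would record the two elementary identities on which everything rests: $\bigcap_{k=1}^{\infty}D({\mathbb \rho}_{k}') = \tilde{X}\cap\bigcap_{k=1}^{\infty}D(\rho_{k})$ by construction, and ${\mathbb \rho}_{k}'x=\rho_{k}x$ for every $k\in{\mathbb N}$ and every $x$ lying in this intersection. The first identity shows that an uncountable set $S$ satisfies $S\subseteq\bigcap_{k}D({\mathbb \rho}_{k}')$ precisely when it satisfies $S\subseteq\tilde{X}\cap\bigcap_{k}D(\rho_{k})$; in particular, every scrambled set witnessing a form of chaos for $({\mathbb \rho}_{k}')$ is automatically contained in $\tilde{X}$, so the backward implications require no extra hypothesis.

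Next I would fix such an $S$, fix $\sigma>0$ (together with whatever auxiliary constants the relevant definition calls for, e.g.\ for the type-$1$ and type-$2$ notions or for the mixed notions), fix $\epsilon>0$ and a pair of distinct points $x,y\in S$. Since ${\mathbb \rho}_{k}'x=\rho_{k}x$ and ${\mathbb \rho}_{k}'y=\rho_{k}y$ for all $k$, the clause ``for each $k\in{\mathbb N}$ there exist $x_{k}\in\rho_{k}x$ and $y_{k}\in\rho_{k}y$ such that [the pertinent condition] holds'' is literally the same statement for the two sequences: each condition in play — \eqref{jednacina}, \eqref{jednacinaj}, \eqref{jednacinaj1}, \eqref{jednacinaj2}, \eqref{kxc}, \eqref{jednacinaq1}, \eqref{jednacinaq2}, \eqref{jednacinaq5}, \eqref{jednacinaq6}, \eqref{jednacinaq32}, \eqref{jednacinaq33}, \eqref{jednacinaq34} — refers only to the numbers $d_{Y}(x_{k},y_{k})$ (and, in the Fr\'echet setting, to the sequence $(x_{k}-y_{k})_{k\in{\mathbb N}}$), which do not depend on whether the relation is $\rho_{k}$ or ${\mathbb \rho}_{k}'$. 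Quantifying back over the distinct pairs, then over $\epsilon$, then over $S$ and the auxiliary constants, I obtain that $(\rho_{k})_{k\in{\mathbb N}}$ is $\tilde{X}$-distributionally chaotic [resp.\ reiteratively $\tilde{X}$-distributionally chaotic, reiteratively $\tilde{X}$-distributionally chaotic of type $i$, $\tilde{X}$-Li-Yorke chaotic, strongly $\tilde{X}$-Li-Yorke chaotic, $\langle\tilde{X},i\rangle$-mixed chaotic, $(\tilde{X},j)$-mixed chaotic] if and only if $({\mathbb \rho}_{k}')_{k\in{\mathbb N}}$ has the corresponding property with the prefix ``$\tilde{X}$'' suppressed.

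For the dense variants I would read ${\mathbb \rho}_{k}'$ as a binary relation emanating from the metric space $\tilde{X}$ itself, carrying the restriction of $d$; then ``$({\mathbb \rho}_{k}')$ densely distributionally chaotic'' means exactly that $S$ can be chosen dense in $\tilde{X}$, and since $S\subseteq\tilde{X}$ this density is the same notion whether computed intrinsically or inside $X$, so it coincides verbatim with the requirement defining dense $\tilde{X}$-chaos of $(\rho_{k})$. The single-relation assertions then need nothing new: for $\rho\subseteq X\times X$ the definitions declare $\rho$ to have a given (dense, reiterative) $\tilde{X}$-distributional-chaos property iff $(\rho^{k})_{k\in{\mathbb N}}$ does, and substituting $\rho_{k}:=\rho^{k}$ into the construction produces precisely the sequence $((\rho^{k})')_{k\in{\mathbb N}}$, already handled. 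I do not anticipate a genuine obstacle; the only thing demanding care is the bookkeeping — keeping track that the passage to $\tilde{X}$ affects solely the domains of the relations and the ambient space in which density is measured, and never the metric distances $d_{Y}(x_{k},y_{k})$ that enter every one of the defining conditions.
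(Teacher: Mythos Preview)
Your proof is correct and matches the paper's approach: the paper states only that ``we can simply prove the following proposition'' and gives no argument, and your unwinding of the definitions via the identities $\bigcap_{k}D(\rho_{k}')=\tilde{X}\cap\bigcap_{k}D(\rho_{k})$ and $\rho_{k}'x=\rho_{k}x$ is precisely the routine verification the authors have in mind.
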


In our further work, we will consider only the sequences $({\mathcal A}_{k})_{k\in
{\mathbb N}}$ of MLOs between the spaces $X$ and $Y$ as well as the orbits of an MLO ${\mathcal A}$ in $X.$ First of all, we would like to observe
the following:

\begin{rem}\label{besko}
Let $S:=\bigcap_{k=1}^{\infty} D({\mathcal A}_{k}) \neq \{0\}$ and let any operator ${\mathcal A}_{k}$ be purely multivalued ($k\in {\mathbb N}$). Choosing numbers
$\sigma>0,\ \epsilon>0$ and each pair $x,\ y\in S$ of distinct points arbitrarily, we can always find appropriate elements $x_{ k}\in {\mathcal A}_{k}x$ and $y_{k}\in {\mathcal A}_{k}x$ such that the set
$\{k \in {\mathbb N} :
d_{Y}(x_{k},y_{k})< \sigma \}$ is finite and
the first equations in
\eqref{jednacina} and \eqref{jednacinaj} automatically hold. Therefore, it is very important to assume that the second parts in the equations, e.g. \eqref{jednacina} and \eqref{jednacinaj}-\eqref{jednacinaj2}, hold with the same elements $x_{k}\in {\mathcal A}_{k}x$ and $y_{k}\in {\mathcal A}_{k}x$ (not for some other elements $x_{k}'\in {\mathcal A}_{k}x$ and $y_{k}'\in {\mathcal A}_{k}x$). If we accept this weaker notion of distributional chaos and reiterative distributional chaos (of type $1$ or $2$), with different vectors $x_{k}'\in {\mathcal A}_{k}x$ and $y_{k}'\in {\mathcal A}_{k}x$ in the second equality of \eqref{jednacina} and \eqref{jednacinaj}-\eqref{jednacinaj2}, then 
we will be in a position to construct a great number of densely distributionally chaotic operators and sequences of MLOs. 
For example,
suppose that $A\in L(X)$ and the linear subspace $X_{0}:=\{x\in X : \lim_{k\rightarrow \infty}A^{k}x=0\}$ is dense in $X.$ Set ${\mathcal A}_{k}x:=A^{k}x+W_{k},$ $k\in {\mathbb N},$ where $W_{k}\neq \{0\}$ is a subspace of $X$ ($k\in {\mathbb N}$). Since the first equation in
\eqref{jednacina} holds, setting $S:=X_{0}$ and $x_{k}':=A^{k}x,$  $y_{k}':=A^{k}y$ ($k\in {\mathbb N},$ $x,\ y\in X_{0}$), it readily follows that the sequence $({\mathcal A}_{k})_{k\in {\mathbb N}}$ will be densely distributionally chaotic in this weaker sense.
In the sequel, we will follow solely the notion in which $x_{k}'=x_{k}$ and $y_{k}'=y_{k}$ ($k\in {\mathbb N}$).
\end{rem}

We can simply verify that the notions of distributional chaos, reiterative distributional chaos, reiterative distributional chaos of type $1$ and reiterative distributional chaos of type $2$ do not coincide: 

\begin{example}\label{primeran}
It is well known that a subset $A$ of ${\mathbb N}$ has the upper Banach density $1$ iff,
for every integer $d\in {\mathbb N},$ the set $A$ contains infinitely many pairwise disjoint intervals of $d$ consecutive
integers. Therefore, it is very 
simple to construct two disjoint subsets $A$ and $B$ of ${\mathbb N}$ such that ${\mathbb N}=A\cup B,$ $\overline{d}(A)<1$ and $\overline{Bd}(A)=\overline{Bd}(B)=1.$ After that, set 
$X:={\mathbb K}$, $T_{k}:=kI$ ($k\in A$) and $T_{k}:=0$ ($k\in B$). Then it can be simply checked that the sequence $(T_{k})_{k\in {\mathbb N}}$
is reiteratively distributionally chaotic but not reiteratively distributionally chaotic of type $2$, as well as that the corresponding reiteratively scrambled set $S$ can be chosen to be the whole space $X.$ Furthermore, there exist two possible subcases: $\overline{d}(B)=1$ or $\overline{d}(B)<1.$ In the first one, the sequence $(T_{k})_{k\in {\mathbb N}}$ is reiteratively distributionally chaotic of type $1$, while in the second one the sequence $(T_{k})_{k\in {\mathbb N}}$ is not reiteratively distributionally chaotic of type $1.$ Keeping in mind the obvious symmetry between the reiterative distributional chaos of type $1$ and reiterative distributional chaos of type $2,$ we obtain the claimed.  
\end{example}

\subsection{Irregular vectors and irregular manifolds}\label{irregular}

We start this section by introducing the following notion (cf. \cite[Definition 18]{2013JFA} and \cite[Definition 3.4]{mendoza} for single-valued linear case): 

\begin{defn}\label{DC-unbounded-fric-prim}
Suppose that for each $k\in {\mathbb N},$ ${\mathcal A}_{k} : D({\mathcal A}_{k})\subseteq X \rightarrow Y$ is an MLO, $\tilde{X}$ is a closed linear
subspace of $X,$ $x\in \bigcap_{k=1}^{\infty}D({\mathcal A}_{k})$ and
$m\in {\mathbb N}.$ Then we say that:
\begin{itemize}
\item[(i)] $x$ is (reiteratively) distributionally
near to $0$ for $({\mathcal A}_{k})_{k\in {\mathbb N}}$
iff there exists $A\subseteq {\mathbb N}$ such that ($\overline{Bd}(A)=1$) $\overline{d}(A)=1$
and for each $k\in A$ there exists $x_{k}\in {\mathcal A}_{k}x$ such that
$\lim_{k\in A,k\rightarrow \infty}x_{k}=0;$
\item[(ii)] $x$ is (reiteratively) distributionally $m$-unbounded for $({\mathcal A}_{k})_{k\in {\mathbb N}}$ iff there exists $B\subseteq
{\mathbb N}$ such that ($\overline{Bd}(B)=1$) $\overline{d}(B)=1$ and for each
$k\in B$ there exists $x_{k}'\in {\mathcal A}_{k}x$ such that
$\lim_{k\in
B,k\rightarrow \infty}p_{m}^{Y}(x_{k}')=\infty ;$ $x$ is said to be (reiteratively) distributionally unbounded for $({\mathcal A}_{k})_{k\in {\mathbb N}}$ iff there
exists $q\in {\mathbb N}$ such that $x$ is (reiteratively) distributionally $q$-unbounded for $({\mathcal A}_{k})_{k\in {\mathbb N}}$
(if $Y$ is a Banach space, this simply means that  $\lim_{k\in
B,k\rightarrow \infty}\|x_{k}'\|_{Y}=\infty );$
\item[(iii)] $x$ is a (reiteratively) $\tilde{X}$-distributionally irregular vector\index{$\tilde{X}$-distributionally irregular vector} for
$({\mathcal A}_{k})_{k\in {\mathbb N}}$ iff $x\in
\bigcap_{k=1}^{\infty}D({\mathcal A}_{k}) \cap \tilde{X},$ (i) holds with with some subset $A$ of ${\mathbb N}$ satisfying $\overline{d}(A)=1$ ($\overline{Bd}(A)=1$) and 
the sequence $(x_{k})$, as well as the second part of
(ii) holds with some subset $B$ of ${\mathbb N}$ satisfying $\overline{d}(B)=1$ ($\overline{Bd}(B)=1$)  
and the same sequence $(x_{k}'=x_{k})$ as in (i) ({\it for the sake of brevity, we will assume in any part \emph{(iv)}-\emph{(xiii)} below that $x_{k}'=x_{k},$ with the meaning clear});
\item[(iv)] $x$ is a reiteratively $\tilde{X}$-distributionally irregular vector of type $1$ for
$({\mathcal A}_{k})_{k\in {\mathbb N}}$ iff $x\in \tilde{X}$ is distributionally near to zero and $x$ is reiterativelty distributionally chaotic for $({\mathcal A}_{k})_{k\in {\mathbb N}};$
\item[(v)] $x$ is a reiteratively $\tilde{X}$-distributionally irregular vector of type $2$ for
$({\mathcal A}_{k})_{k\in {\mathbb N}}$ iff $x\in \tilde{X}$ is reiteratively distributionally near to zero and $x$ is distributionally chaotic for $({\mathcal A}_{k})_{k\in {\mathbb N}};$
\item[(vi)] $x$ is a strong $\tilde{X}$-Li-Yorke irregular vector
for $({\mathcal A}_{k})_{k\in {\mathbb N}}$
iff  $x\in
\bigcap_{k=1}^{\infty}D({\mathcal A}_{k}) \cap \tilde{X}$ and for each $k\in {\mathbb N}$ there exists $x_{k}\in {\mathcal A}_{k}x$ such that $(x_{k})_{k\in {\mathbb N}}$ is unbounded and has a subsequence converging to zero;
\item[(vii)] $x$ is a $\tilde{X}$-Li-Yorke irregular vector
for $({\mathcal A}_{k})_{k\in {\mathbb N}}$
iff  $x\in
\bigcap_{k=1}^{\infty}D({\mathcal A}_{k}) \cap \tilde{X}$ and for each $k\in {\mathbb N}$ there exists $x_{k}\in {\mathcal A}_{k}x$ such that  $(x_{k})_{k\in {\mathbb N}}$
does not converge to zero but it has a subsequence converging to zero;
\item[(viii)] $x$ is a $(\tilde{X},1)$-distributionally irregular vector
for $({\mathcal A}_{k})_{k\in {\mathbb N}}$
iff $x$ is reiteratively distributionally unbounded for $({\mathcal A}_{k})_{k\in {\mathbb N}}$ and for each $k\in {\mathbb N}$ there exists $x_{k}\in {\mathcal A}_{k}x$ such that  $(x_{k})_{k\in {\mathbb N}}$
has a subsequence converging to zero;
\item[(ix)] $x$ is a $(\tilde{X},2)$-distributionally irregular vector
for $({\mathcal A}_{k})_{k\in {\mathbb N}}$
iff $x\in \tilde{X}$ is distributionally unbounded for $({\mathcal A}_{k})_{k\in {\mathbb N}}$ and for each $k\in {\mathbb N}$ there exists $x_{k}\in {\mathcal A}_{k}x$ such that  $(x_{k})_{k\in {\mathbb N}}$
has a subsequence converging to zero;
\item[(x)] $x$ is 
a $(\tilde{X},3)$-distributionally irregular vector
for $({\mathcal A}_{k})_{k\in {\mathbb N}}$ iff 
for each $k\in {\mathbb N}$ there exists $x_{k}\in {\mathcal A}_{k}x$ such that $(x_{k})_{k\in {\mathbb N}}$
does not converge to zero and $x\in \tilde{X}$ is 
reiteratively distributionally
near to $0$ for $({\mathcal A}_{k})_{k\in {\mathbb N}};$
\item[(xi)] $x$ is 
a $(\tilde{X},4)$-distributionally irregular vector
for $({\mathcal A}_{k})_{k\in {\mathbb N}}$ iff 
for each $k\in {\mathbb N}$ there exists $x_{k}\in {\mathcal A}_{k}x$ such that $(x_{k})_{k\in {\mathbb N}}$
does not converge to zero and $x\in \tilde{X}$ is 
distributionally
near to $0$ for $({\mathcal A}_{k})_{k\in {\mathbb N}};$
\item[(xii)] $x$ is a $\langle \tilde{X},1 \rangle$-mixed chaotic irregular vector
for $({\mathcal A}_{k})_{k\in {\mathbb N}}$
iff  $x\in
\bigcap_{k=1}^{\infty}D({\mathcal A}_{k}) \cap \tilde{X}$ and for each $k\in {\mathbb N}$ there exists $x_{k}\in {\mathcal A}_{k}x$ such that  $(x_{k})_{k\in {\mathbb N}}$
is unbounded and $x$ is reiteratively distributionally
near to $0$ for
$({\mathcal A}_{k})_{k\in {\mathbb N}}$;
\item[(xiii)] $x$ is a $\langle \tilde{X},2 \rangle$-mixed chaotic irregular vector
for $({\mathcal A}_{k})_{k\in {\mathbb N}}$
iff  $x\in
\bigcap_{k=1}^{\infty}D({\mathcal A}_{k}) \cap \tilde{X}$ and for each $k\in {\mathbb N}$ there exists $x_{k}\in {\mathcal A}_{k}x$ such that  $(x_{k})_{k\in {\mathbb N}}$
is unbounded and $x$ is distributionally
near to $0$ for
$({\mathcal A}_{k})_{k\in {\mathbb N}};$
\item[(xiv)] $x$ is $\tilde{X}$-Li-Yorke near to zero 
for $({\mathcal A}_{k})_{k\in {\mathbb N}}$
iff  $x\in
\bigcap_{k=1}^{\infty}D({\mathcal A}_{k}) \cap \tilde{X}$ and for each $k\in {\mathbb N}$ there exists $x_{k}\in {\mathcal A}_{k}x$ such that $(x_{k})_{k\in {\mathbb N}}$ has a  subsequence converging to zero.
\end{itemize}
If ${\mathcal A} : D({\mathcal A})\subseteq X \rightarrow X$ is an MLO, then $x$ is a
(reiteratively) $\tilde{X}$-distributionally irregular vector\index{$\tilde{X}$-distributionally irregular vector} for ${\mathcal A}$ iff
$x$ is a
(reiteratively) $\tilde{X}$-distributionally irregular vector\index{$\tilde{X}$-distributionally irregular vector} for the sequence 
$({\mathcal A}_{k}\equiv {\mathcal A}^{k})_{k\in {\mathbb N}};$ we accept this definition for all other parts (iii)-(xiv). 
\end{defn}

Keeping in mind the inequality $\overline{d}(A) \leq \overline{Bd}(A)$, 
it readily follows that the statements (A)-(C) and all implications clarified in Proposition \ref{funda} can be formulated for irregular vectors introduced above. 
Further on, we would like to note there are some important differences between Banach spaces and Fr\' echet spaces
with regard to the existence
of (reiteratively) distributionally unbounded vectors for sequences of MLOs:

\begin{example}\label{bf}
\begin{itemize}
\item[(i)] Suppose that the upper (Banach) density of set $\tilde{B}:=\{k\in {\mathbb N} : {\mathcal A}_{k}\mbox{ is purelly multivalued}\}$ is equal to $1,$ and $Y$ is a Banach space.
Then any vector $x\in \bigcap_{k=1}^{\infty}D({\mathcal A}_{k})$ is (reiteratively) distributionally unbounded. To see this, observe that in the part (ii)
of previous definition we can take $B=\tilde{B};$ then for any $k\in B,$ choosing arbitrary $x_{k}'\in {\mathcal A}_{k}x,$  we can always find $y_{k}' \in {\mathcal A}_{k}0$ such that we have
$\| x_{k}\|_{Y}=\|x_{k}'+y_{k}'\|_{Y}>2^{k},$ with $x_{k}=x_{k}'+y_{k}'.$
\item[(ii)] The situation is quite different in the case that $Y$ is a Fr\' echet space, we again assume that the set $\tilde{B}$ defined above has the upper (Banach) density equal to $1:$ Then there need not exist a vector $x\in \bigcap_{k=1}^{\infty}D({\mathcal A}_{k})$
that is (reiteratively) distributionally $m$-unbounded for some $m\in {\mathbb N}$. To illustrate this, consider the case in which $X:=Y:=C({\mathbb R}),$ equipped with the usual topology, and the operator ${\mathcal A}_{k}$ is defined by $D({\mathcal A}_{k}):=X$
and ${\mathcal A}_{k}f:=f+C_{[k,\infty)}({\mathbb R}),$ $k\in {\mathbb N},$ where $C_{[k,\infty)}({\mathbb R}) :=\{f \in C({\mathbb R}) : \mbox{supp}(f)\subseteq [k,\infty)\}.$ Then $\tilde{B}={\mathbb N}$ but for any $f\in X$
we have $\|f+g\|_{m}^{Y}=\|f\|_{m}^{Y}\equiv \sup_{x\in [-m,m]}|f(x)|,$
$g\in C_{[k,\infty)}({\mathbb R}),$ $m\leq k.$
\end{itemize}
\end{example}

Despite of the above, it should be noted that 
the existence of a scalar $\lambda \in \sigma_{p}({\mathcal A})$ with $|\lambda|>1$ implies that for any corresponding eigenvector $x\in X$ and any integer $k\in {\mathbb N}$ we have $\lambda^{k}x\in {\mathcal A}^{k}x,$ which in particular shows that $x$ has distributionally unbounded orbit under ${\mathcal A}.$ 

In \cite[Theorem 3.5]{kerry-drew}, we have proved that the hypercyclicity of an MLO ${\mathcal A}$ implies $\sigma_{p}({\mathcal A}^{\ast})=\emptyset.$ This is no longer true for 
dense Li-Yorke chaos, where we can state the following (see \cite[Proposition 11, Remark 12]{band} for single-valued case):

\begin{prop}\label{reci}
Suppose that ${\mathcal A}$ is an \emph{MLO} and $\lambda \in  \sigma_{p}({\mathcal A}^{\ast})$ satisfies $|\lambda|\geq 1.$ Then ${\mathcal A}$ cannot have a dense set of   
Li-Yorke near to zero 
vectors.
\end{prop}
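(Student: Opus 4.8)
The plan is to derive a contradiction from the existence of a dense set of Li-Yorke near to zero vectors by pairing them against the eigenvector $x^{\ast}$ of ${\mathcal A}^{\ast}$ with eigenvalue $\lambda$, $|\lambda|\geq 1$. First I would recall what $\lambda\in\sigma_{p}({\mathcal A}^{\ast})$ means: there is $x^{\ast}\in Y^{\ast}\setminus\{0\}$ with $(\lambda x^{\ast},x^{\ast})\in\ldots$, i.e. $\langle x^{\ast}, y\rangle = \langle \lambda x^{\ast}, x\rangle = \lambda\langle x^{\ast},x\rangle$ for all $(x,y)\in{\mathcal A}$. Iterating, for every $(x,y)\in{\mathcal A}^{k}$ one gets $\langle x^{\ast}, y\rangle = \lambda^{k}\langle x^{\ast},x\rangle$; this is the standard computation for powers of an MLO using the graph definition of ${\mathcal A}^{\ast}$ and the definition of ${\mathcal A}^{k}$ as composition. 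The key consequence: for any $x\in D_{\infty}({\mathcal A})$ and any $x_{k}\in{\mathcal A}^{k}x$, we have $|\langle x^{\ast},x_{k}\rangle| = |\lambda|^{k}\,|\langle x^{\ast},x\rangle| \geq |\langle x^{\ast},x\rangle|$.

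Now suppose, towards a contradiction, that ${\mathcal A}$ has a dense set $S$ of Li-Yorke near to zero vectors. If $x\in S$ has $\langle x^{\ast},x\rangle\neq 0$, then by the inequality above $|\langle x^{\ast},x_{k}\rangle|$ is bounded below by the positive constant $|\langle x^{\ast},x\rangle|$ for every choice $x_{k}\in{\mathcal A}^{k}x$; but continuity of $x^{\ast}$ then prevents any subsequence of $(x_{k})$ from converging to $0$, contradicting the defining property (xiv) of Definition \ref{DC-unbounded-fric-prim}. Hence $S\subseteq\ker x^{\ast}:=\{x\in X:\langle x^{\ast},x\rangle=0\}$, which is a proper closed subspace of $X$ (proper because $x^{\ast}\neq 0$, closed because $x^{\ast}$ is continuous). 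But a proper closed subspace has empty interior, so it cannot be dense; this contradicts the density of $S$ in $X=\tilde X$. Therefore no such dense set exists.

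The main obstacle, and the only genuinely non-routine point, is justifying the identity $\langle x^{\ast},x_{k}\rangle = \lambda^{k}\langle x^{\ast},x\rangle$ for $x_{k}\in{\mathcal A}^{k}x$ in the multivalued setting — one must check that the pair $(x,x_{k})$ indeed lies in the linear relation ${\mathcal A}^{k}$ and then that the eigen-relation for ${\mathcal A}^{\ast}$ propagates through the $k$-fold composition. This follows by a short induction: if $(x,x_{k})\in{\mathcal A}^{k}$ then there is $z$ with $(x,z)\in{\mathcal A}$ and $(z,x_{k})\in{\mathcal A}^{k-1}$, so $\langle x^{\ast},x_{k}\rangle = \lambda^{k-1}\langle x^{\ast},z\rangle = \lambda^{k-1}\cdot\lambda\langle x^{\ast},x\rangle = \lambda^{k}\langle x^{\ast},x\rangle$, using the induction hypothesis and the $k=1$ relation. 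One subtlety worth a remark: since ${\mathcal A}$ may be purely multivalued, ${\mathcal A}^{k}x$ is a coset $x_{k}+{\mathcal A}^{k}0$, and the argument shows $x^{\ast}$ annihilates ${\mathcal A}^{k}0$ (take $x=0$), so the value $\langle x^{\ast},x_{k}\rangle$ is independent of which representative $x_{k}\in{\mathcal A}^{k}x$ is chosen — consistent with the conclusion. Everything else (continuity giving closedness of the kernel, proper subspaces not being dense) is standard and needs no elaboration.
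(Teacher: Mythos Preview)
Your proof is correct and follows essentially the same route as the paper's: both derive the identity $\langle x^{\ast},x_{k}\rangle=\lambda^{k}\langle x^{\ast},x\rangle$ for $x_{k}\in{\mathcal A}^{k}x$ from the eigenrelation $\lambda x^{\ast}\in{\mathcal A}^{\ast}x^{\ast}$, and then use density of $S$ to find a point with $\langle x^{\ast},x\rangle\neq 0$, contradicting the existence of a subsequence of $(x_{k})$ tending to $0$ since $|\lambda|\geq 1$. Your write-up is slightly more explicit (the induction for the identity and the reformulation via $S\subseteq\ker x^{\ast}$), but the argument is the same.
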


\begin{proof}
Suppose the contrary, i.e., there exists a dense set $S$ of   
Li-Yorke near to zero 
vectors. Let $x^{\ast}\in X^{\ast} \setminus \{0\}$ be such that $\lambda x^{\ast}\in {\mathcal A}^{\ast}x^{\ast}.$ Then it can be simply shown that for each $x\in S$ and $n\in {\mathbb N}$ the supposition $x_{n}\in {\mathcal A}^{n}x$ implies 
\begin{align}\label{dov}
\bigl \langle x^{\ast},x_{n} \bigr \rangle =\bigl \langle \lambda^{n}  x^{\ast}, x \bigr \rangle.
\end{align}
Take now any $x\in S$ such that $\langle x^{\ast},x \rangle \neq 0.$ Then there exists a sequence $(x_{n})_{n\in {\mathbb N}}$ in $X$ such that $x_{n}\in {\mathcal A}^{n}x$ for all $n\in {\mathbb N}$ and $(x_{n})_{n\in {\mathbb N}}$ has a subsequence converging to zero. By \eqref{dov}, it readily follows that $|\lambda|<1,$ which is a contradiction.
\end{proof}

The following result is a kind of Godefroy-Shapiro and Dech-Schappacher-Webb Criterion for multivalued linear operators:

\begin{thm}\label{2.1ds-prim-444-skins} (cf. \cite[Theorem 3.8]{mendoza})
Suppose that $\Omega$ is an open connected
subset of ${\mathbb K}={\mathbb C}$ satisfying
$\Omega \ \cap \ S_{1} \neq \emptyset .$
Let $f : \Omega \rightarrow X \setminus
\{0\}$ be an analytic mapping such
that $\lambda f(\lambda)\in {\mathcal A}f(\lambda)$ for all $\lambda \in \Omega .$ Set
$\tilde{X}:=\overline{span\{f(\lambda) : \lambda \in
\Omega\}}.$ Then the operator ${\mathcal A}_{|\tilde{X}}$ is
topologically mixing in the space $\tilde{X}$ and the set of periodic points of ${\mathcal A}_{|\tilde{X}}$
is dense in $\tilde{X}.$
\end{thm}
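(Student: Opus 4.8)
The plan is to adapt the classical Godefroy--Shapiro argument to the multivalued setting, exploiting that along the analytic family $f$ the operator $\mathcal{A}$ behaves like a genuine linear operator with $f(\lambda)$ an eigenvector for the eigenvalue $\lambda$. First I would record the basic reproducing property: if $\lambda f(\lambda)\in\mathcal{A}f(\lambda)$ for all $\lambda\in\Omega$, then by the MLO identity $\lambda\mathcal{A}x+\eta\mathcal{A}y=\mathcal{A}(\lambda x+\eta y)$ and induction on $n$, one gets $\lambda^{n}f(\lambda)\in\mathcal{A}^{n}f(\lambda)$, and more generally finite linear combinations $\sum_{j}c_{j}\lambda_{j}^{n}f(\lambda_{j})\in\mathcal{A}^{n}\bigl(\sum_{j}c_{j}f(\lambda_{j})\bigr)$. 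Hence, restricting to $\tilde X=\overline{\mathrm{span}\{f(\lambda):\lambda\in\Omega\}}$, the operator $\mathcal{A}_{|\tilde X}$ admits, on the dense subspace $X_{0}:=\mathrm{span}\{f(\lambda):\lambda\in\Omega\}$, a single-valued linear selection $B$ with $Bf(\lambda)=\lambda f(\lambda)$; the multivaluedness disappears on $\tilde X$ for the purposes of producing forward orbits.

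Next I would split $\Omega$ according to modulus: set $\Omega_{+}:=\{\lambda\in\Omega:|\lambda|>1\}$, $\Omega_{-}:=\{\lambda\in\Omega:|\lambda|<1\}$, and note both are nonempty open sets since $\Omega$ is connected and meets $S_{1}$. Standard linear-dynamics reasoning (as in \cite{erdper}) gives that $\mathrm{span}\{f(\lambda):\lambda\in\Omega_{+}\}$ and $\mathrm{span}\{f(\lambda):\lambda\in\Omega_{-}\}$ are each dense in $\tilde X$: if a continuous functional $x^{\ast}$ annihilates all $f(\lambda)$ for $\lambda$ in an open subset of $\Omega$, then $\lambda\mapsto\langle x^{\ast},f(\lambda)\rangle$ is analytic and vanishes on an open set, hence vanishes on all of $\Omega$ by the identity theorem, so $x^{\ast}\perp\tilde X$; then invoke Hahn--Banach. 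For topological mixing I would verify the mixing criterion: given nonempty open $U,V\subseteq\tilde X$, pick $u\in U\cap\mathrm{span}\{f(\lambda):\lambda\in\Omega_{-}\}$ and $v\in V\cap\mathrm{span}\{f(\lambda):\lambda\in\Omega_{+}\}$; writing $u=\sum c_{i}f(\mu_{i})$ with $|\mu_{i}|<1$ and $v=\sum d_{j}f(\nu_{j})$ with $|\nu_{j}|>1$, the vector $w_{n}:=u+\sum_{j}d_{j}\nu_{j}^{-n}f(\nu_{j})$ lies in $\mathcal{A}^{n}$-preimage data and satisfies $w_{n}\to u$, while $\sum_{i}c_{i}\mu_{i}^{n}f(\mu_{i})+v\in\mathcal{A}^{n}w_{n}$ converges to $v$; so for all large $n$, $\mathcal{A}^{n}(U)\cap V\neq\emptyset$, which is exactly topological mixing.

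For the density of periodic points, I would use roots of unity: the set $\{\lambda\in\Omega:\lambda^{m}=1\text{ for some }m\in\mathbb{N}\}$ accumulates on $\Omega\cap S_{1}\neq\emptyset$, hence is infinite with an accumulation point in $\Omega$; therefore $\mathrm{span}\{f(\lambda):\lambda\in\Omega,\ \lambda^{m}=1\text{ for some }m\}$ is dense in $\tilde X$ by the same analytic-annihilator argument. Any finite combination $\sum_{j}c_{j}f(\lambda_{j})$ with each $\lambda_{j}$ a root of unity satisfies $\mathcal{A}^{N}\bigl(\sum_{j}c_{j}f(\lambda_{j})\bigr)\ni\sum_{j}c_{j}\lambda_{j}^{N}f(\lambda_{j})=\sum_{j}c_{j}f(\lambda_{j})$ when $N$ is a common multiple of the orders, so such a vector is a periodic point of $\mathcal{A}_{|\tilde X}$ in the sense defined in Subsection \ref{MLOs}; these are dense.

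\textbf{Main obstacle.} The delicate point specific to the multivalued case is that $\mathcal{A}^{n}(\sum c_{j}f(\lambda_{j}))$ is in general a coset $\sum c_{j}\lambda_{j}^{n}f(\lambda_{j})+\mathcal{A}^{n}0$, not a single point, so I must consistently \emph{choose} the selection $\sum c_{j}\lambda_{j}^{n}f(\lambda_{j})$ at every stage and make sure the mixing-criterion bookkeeping only ever uses this selection; the containment $\lambda^{n}f(\lambda)\in\mathcal{A}^{n}f(\lambda)$ must be established carefully by induction using $\mathcal{A}(\lambda x+\eta y)=\lambda\mathcal{A}x+\eta\mathcal{A}y$ and $\mathcal{A}\mathcal{A}^{n-1}\supseteq\mathcal{A}^{n}$ (with equality not needed). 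Everything else is the routine transcription of the Godefroy--Shapiro / Desch--Schappacher--Webb criterion, and I expect no surprises there.
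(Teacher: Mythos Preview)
The paper does not supply its own proof of this theorem; it is stated with a reference to \cite[Theorem 3.8]{mendoza} and no argument is given in the text. Your plan is precisely the standard Godefroy--Shapiro / Desch--Schappacher--Webb argument adapted to the multivalued setting, and it is correct; this is undoubtedly the same route the cited reference takes.

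One small remark: your aside about the existence of a single-valued linear selection $B$ on $X_{0}=\mathrm{span}\{f(\lambda):\lambda\in\Omega\}$ with $Bf(\lambda)=\lambda f(\lambda)$ is not obviously well-defined (there is no reason the vectors $f(\lambda)$ should be linearly independent), but you never actually use $B$ --- everything you need is the containment $\sum_{j}c_{j}\lambda_{j}^{n}f(\lambda_{j})\in\mathcal{A}^{n}\bigl(\sum_{j}c_{j}f(\lambda_{j})\bigr)$, which follows directly by induction from $\lambda f(\lambda)\in\mathcal{A}f(\lambda)$ and the identity $\mathcal{A}(\lambda x+\eta y)=\lambda\mathcal{A}x+\eta\mathcal{A}y$. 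I would simply drop the mention of $B$. Likewise, the inclusion you flag as the ``main obstacle'' is in fact an equality, $\mathcal{A}^{n}=\mathcal{A}\circ\mathcal{A}^{n-1}$, by the very definition of composition of binary relations; so the induction step is even cleaner than you suggest.
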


Now we would like to propose the following problem:\vspace{0.2cm}

\noindent {\bf Problem 1.} Suppose that the requirements of Theorem \ref{2.1ds-prim-444-skins} hold true. Is it true that  
the operator ${\mathcal A}_{|\tilde{X}}$ is
densely distributionally chaotic in the space $\tilde{X}$? \vspace{0.1cm}

Assuming that the answer to Problem 1 is affirmative, we will be in a position to construct a substantially large class of densely distributionally chaotic MLOs (see e.g. \cite[Example 3.10, Example 3.12, Example 3.13]{kerry-drew}).

To state the next problem, let us assume that $T\in L(X)$ and there exists a dense linear submanifold $X_{0}$ of $X$ such that for each $x\in X_{0}$ one has $\lim_{n\rightarrow \infty}T^{n}x=0.$ Then it is well known that the existence of a distributionally unbounded vector $x$ for $T$ (a bounded sequence $(x_{n})$ in $X$ such that the sequence $(T^{n}x_{n})$ is unbounded) implies that there exists a dense distributionally irregular manifold (dense Li-Yorke irrregular manifold) for $T;$ see \cite[Theorem 15]{2013JFA} and \cite[Theorem 20]{band}.  
Now we would like to raise the following issue:\vspace{0.2cm}

\noindent {\bf Problem 2.} Do there exist similar conditions 
ensuring dense distributional chaos (dense Li-Yorke chaos) for orbits of MLOs?\vspace{0.1cm}

We continue by introducing the following notion:

\begin{defn}\label{idiotisen}
Let $\{0\} \neq X' \subseteq \tilde{X}$ be a linear manifold and let $i\in \{1,2\}$. Then
we say that:
\begin{itemize}
\item[(i)]
$X'$ is (reiteratively) $\tilde{X}$-distributionally
irregular manifold, resp. reiteratively $\tilde{X}$-distributionally
irregular manifold of type $i$/(strong) $\tilde{X}$-Li-Yorke irregular manifold for $({\mathcal A}_{k})_{k\in {\mathbb N}}$
((reiteratively) distributionally irregular manifold, resp. reiteratively distributionally irregular manifold of type $i$/(strong)  Li-Yorke irregular manifold in the case that $\tilde{X}=X$)
iff any element $x\in (X' \cap
\bigcap_{k=1}^{\infty}D({\mathcal A}_{k})) \setminus \{0\}$ is a
(reiteratively) $\tilde{X}$-distributionally irregular vector, resp. reiteratively $\tilde{X}$-distributionally irregular vector of type $i$/(strong) $\tilde{X}$-Li-Yorke irregular vector for
$({\mathcal A}_{k})_{k\in {\mathbb N}};$
\item[(ii)]
$X'$ is a uniformly (reiteratively) $\tilde{X}$-distributionally
irregular manifold\index{$\tilde{X}$-distributionally
irregular manifold!uniformly} for $({\mathcal A}_{k})_{k\in {\mathbb N}}$
(uniformly (reiteratively) distributionally irregular manifold\index{distributionally irregular manifold!uniformly} in the case that $\tilde{X}=X$)
iff there exists $m\in {\mathbb N}$ such that
any vector $x\in (X' \cap
\bigcap_{k=1}^{\infty}D({\mathcal A}_{k})) \setminus \{0\}$ is both (reiteratively) distributionally $m$-unbounded and (reiteratively) distributionally near to $0$ for $({\mathcal A}_{k})_{k\in {\mathbb N}}.$ 
\end{itemize}
The notions of a uniformly reiteratively $\tilde{X}$-distributionally
irregular manifold of type $i$ and a
uniformly $(\tilde{X},i)$-mixed irregular manifold for $i\in {\mathbb N}_{2}$ as well as
$(\tilde{X},i)$-mixed irregular manifold for $i\in {\mathbb N}_{4}$ and $\langle \tilde{X},i \rangle$-mixed irregular manifold for $i\in {\mathbb N}_{2}$ 
are introduced analogically.
The notion of any type of (uniformly) $\tilde{X}$-irregular manifold for an MLO ${\mathcal A} : D({\mathcal A})\subseteq X \rightarrow X$ is defined as before, by using the sequence $({\mathcal A}_{k}\equiv {\mathcal A}^{k})_{k\in {\mathbb N}}$.
\end{defn}

Let $i\in \{1,2\}$.
Using the elementary properties of metric, it can be simply verified that 
$X'$ is $2^{-m}_{\tilde{X}}$-(reiteratively) scrambled set for
$({\mathcal A}_{k})_{k\in {\mathbb N}}$ whenever $X'$ is a uniformly (reiteratively) $\tilde{X}$-distributionally
irregular manifold\index{$\tilde{X}$-distributionally
irregular manifold!uniformly} for $({\mathcal A}_{k})_{k\in {\mathbb N}};$ 
a similar notion holds for uniformly reiteratively $X'$-distributionally
irregular manifolds of type $i$ and uniformly $(\tilde{X},i)$-mixed irregular manifold for $i\in {\mathbb N}_{2}$. 
Clearly, if $X'$ is a (strong) $\tilde{X}$-Li-Yorke irregular manifold for $({\mathcal A}_{k})_{k\in {\mathbb N}},$ then $X'$ is a (strong) $\tilde{X}$-scrambled Li-Yorke set for $({\mathcal A}_{k})_{k\in {\mathbb N}};$ a similar statement holds for $(\tilde{X},i)$-mixed irregular chaos, where $i\in {\mathbb N}_{4},$ and $\langle \tilde{X},i \rangle$-mixed chaos, where $i\in {\mathbb N}_{2}.$ 
Furthermore, it can be
simply verified that, if $0\neq x\in \tilde{X} \cap \bigcap_{k=1}^{\infty}D({\mathcal A}_{k})$ is a (reiteratively)
$\tilde{X}$-distributionally irregular vector, resp. reiteratively
$\tilde{X}$-distributionally irregular vector of type $i$/(strong) $\tilde{X}$-Li-Yorke irregular vector for $({\mathcal A}_{k})_{k\in {\mathbb N}},$
then $X'\equiv span\{x\}$
is a uniformly (reiteratively) $\tilde{X}$-distributionally irregular manifold, resp. uniformly reiteratively $\tilde{X}$-distributionally irregular manifold of type $i$/(strong) $\tilde{X}$-Li-Yorke irregular manifold) for
$({\mathcal A}_{k})_{k\in {\mathbb N}};$ a similar statement holds for $(\tilde{X},i)$-mixed irregular chaos, where $i\in {\mathbb N}_{4},$ and $\langle \tilde{X},i \rangle$-mixed chaos, where $i\in {\mathbb N}_{2}.$ 

If $X'$ is dense in $\tilde{X},$
then the notions of dense (reiteratively) ($\tilde{X}$-)distributionally
irregular manifolds, dense uniformly (reiteratively) ($\tilde{X}$-)distributionally
irregular manifolds, and so forth, are defined analogically. The same agreements are accepted for all other types of chaos considered above.

If $({\mathcal A}_{k})_{k\in {\mathbb N}}$ and $\tilde{X}$ are given in advance, then we define the MLOs
${\mathbb A}_{k} : D({\mathbb A}_{k})\subseteq X \rightarrow Y$ by $D({\mathbb A}_{k}):=D({\mathcal A}_{k}) \cap \tilde{X}$ and
${\mathbb A}_{k}x:={\mathcal A}_{k}x,$ $x\in D({\mathbb A}_{k})$ ($k\in {\mathbb N}$). Then the following holds:

\begin{prop}\label{subspace-fric}
Let $i\in \{1,2\}.$
\begin{itemize}
\item[(i)] A vector $x$ is a (reiteratively) $\tilde{X}$-distributionally irregular
vector, resp. reiteratively $\tilde{X}$-distributionally irregular
vector of type $i$/(strong) $\tilde{X}$-Li-Yorke irregular vector for $({\mathcal A}_{k})_{k\in {\mathbb N}}$
iff $x$ is a (reiteratively) distributionally irregular vector, resp. reiteratively distributionally irregular vector of type $i$/(strong) Li-Yorke irregular vector for $({\mathbb A}_{k})_{k\in {\mathbb N}}.$ The same holds for $\langle \tilde{X},i \rangle$-mixed chaos and $(\tilde{X},j)$-mixed chaos, where $j\in {\mathbb N}_{4}.$
\item[(ii)] A linear manifold $X'$ is a (uniformly, (reiteratively)) $\tilde{X}$-distributionally irregular manifold, resp. (uniformly) reiteratively $\tilde{X}$-distributionally irregular manifold of type $i$/(strong) $\tilde{X}$-Li-Yorke irregular manifold for
$({\mathcal A}_{k})_{k\in {\mathbb N}}$ iff $X'$ is a (uniformly, (reiteratively)) distributionally irregular manifold, resp. (uniformly) reiteratively distributionally irregular manifold of type $i$/(strong) Li-Yorke irregular manifold for the sequence
$({\mathbb A}_{k})_{k\in {\mathbb N}}.$ The same holds for $(\tilde{X},i)$-mixed chaos.
\end{itemize}
\end{prop}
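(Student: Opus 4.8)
The plan is to verify the equivalences in Proposition \ref{subspace-fric} by a direct unwinding of the definitions, exploiting the single fact that the operators $({\mathbb A}_k)_{k\in{\mathbb N}}$ are obtained from $({\mathcal A}_k)_{k\in{\mathbb N}}$ by restricting the domain to $D({\mathcal A}_k)\cap\tilde{X}$ while leaving the action untouched. For part (i), first I would observe that $\bigcap_{k=1}^{\infty}D({\mathbb A}_k)=\bigcap_{k=1}^{\infty}(D({\mathcal A}_k)\cap\tilde{X})=\bigl(\bigcap_{k=1}^{\infty}D({\mathcal A}_k)\bigr)\cap\tilde{X}$, so that a vector $x$ lies in the common domain of the $({\mathbb A}_k)$ exactly when it lies in $\bigcap_{k=1}^{\infty}D({\mathcal A}_k)\cap\tilde{X}$ --- which is precisely the membership requirement appearing in clauses (iii)--(xiv) of Definition \ref{DC-unbounded-fric-prim}. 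Since ${\mathbb A}_k x={\mathcal A}_k x$ for every such $x$, the set of admissible choices $x_k\in{\mathbb A}_k x$ coincides with the set of admissible choices $x_k\in{\mathcal A}_k x$, and therefore every density condition ($\overline d=1$, $\overline{Bd}=1$), every unboundedness condition on $(x_k)$ or on $(p_m^Y(x_k))$, and every ``has a subsequence converging to zero'' condition is literally the same statement for the two sequences. Running through clauses (iii), (iv), (v), (vi), (xii), (xiii) in turn --- and their $(\tilde{X},j)$-mixed counterparts (viii)--(xi) --- one sees the ``iff'' in each case reduces to this tautology; I would spell out one representative case (say the reiteratively $\tilde{X}$-distributionally irregular vector, clause (iii)) and assert the rest follow mutatis mutandis.

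For part (ii), the manifold statement, I would argue that $X'\cap\bigcap_{k=1}^{\infty}D({\mathbb A}_k)=X'\cap\bigl(\bigcap_{k=1}^{\infty}D({\mathcal A}_k)\bigr)\cap\tilde{X}=X'\cap\bigcap_{k=1}^{\infty}D({\mathcal A}_k)$, the last equality because $X'\subseteq\tilde{X}$ by hypothesis. Hence the set of nonzero vectors over which the defining property of an irregular manifold must be checked is the same whether we work with $({\mathcal A}_k)$ or $({\mathbb A}_k)$; combining this with part (i) applied to each such vector gives the equivalence for the non-uniform versions, and the uniform versions follow because the integer $m$ witnessing ``distributionally $m$-unbounded'' can be taken identical for both sequences (again, same vectors, same choices $x_k$, same seminorm values). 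The ``(reiteratively)'', ``strong'', ``type $i$'', ``$(\tilde{X},j)$-mixed'' and ``$\langle\tilde{X},i\rangle$-mixed'' variants are handled by the same substitution, and the dense versions follow by noting density of $X'$ in $\tilde{X}$ is unaffected.

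The only point requiring a moment's care --- and the closest thing to an obstacle --- is the bookkeeping around which clauses of Definition \ref{DC-unbounded-fric-prim} carry an explicit membership hypothesis $x\in\bigcap_k D({\mathcal A}_k)\cap\tilde{X}$ versus those (such as (iv), (v), (viii)) phrased via ``$x\in\tilde{X}$ is distributionally near to zero and \dots'': in the latter the $\tilde{X}$-membership is built into the sub-properties through the very same mechanism, so one must check that translating to $({\mathbb A}_k)$ and dropping ``$\in\tilde{X}$'' from the hypothesis is consistent, which it is precisely because $D({\mathbb A}_k)\subseteq\tilde{X}$ forces $\bigcap_k D({\mathbb A}_k)\subseteq\tilde{X}$ automatically. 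Once this is noted, the proof is entirely formal; I would therefore keep it short, prove the representative case in full, and remark that all remaining cases are verified in the same routine way. I expect no genuine difficulty, only the risk of an incomplete enumeration of cases, which a single uniform argument about $D({\mathbb A}_k)=D({\mathcal A}_k)\cap\tilde{X}$ and ${\mathbb A}_k={\mathcal A}_k$ on that domain forestalls.
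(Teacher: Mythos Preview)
Your proposal is correct and matches the paper's treatment: the paper states this proposition without proof, introducing it with ``Then the following holds,'' so the intended argument is exactly the routine unwinding of Definition \ref{DC-unbounded-fric-prim} and Definition \ref{idiotisen} that you outline. Your observation that $\bigcap_{k} D({\mathbb A}_{k})=\bigl(\bigcap_{k} D({\mathcal A}_{k})\bigr)\cap\tilde{X}$ together with ${\mathbb A}_{k}x={\mathcal A}_{k}x$ on this set is the entire content, and your handling of the one bookkeeping subtlety (the implicit $\tilde{X}$-membership in clauses (iv), (v), (viii)) is correct.
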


The fundamental distributionally chaotic properties of linear, not necessarily continuous, operators have been clarified in \cite[Corollary 3.12, Theorem 3.13]{kerry-drew}.
The proofs of these results,  which are intended solely for the analysis of single-valued operators, lean heavily on the methods and ideas from the theory of $C$-regularized semigroups (see \cite{knjigah}-\cite{knjigaho} and references cited therein for more details on the subject). For the investigations of distributionally chaotic properties of pure MLOs, we do not have such a powerful technique by now.

\section{Conclusions and final remarks}\label{finale}

In this paper, we have introduced a great number of distributionally chaotic and Li-Yorke chaotic properties for general sequences of binary relations acting between metric spaces. We have
carried out a special study of distributionally chaotic and Li-Yorke chaotic multivalued linear operators in Fr\' echet spaces, as well, providing a great number of illustrative examples and observations about problems considered.

In a series of recent research studies, N. C. Bernardes Jr. et al and T. Berm\'udez et al have analyzed the notions of mean Li-Yorke chaos, absolute Ces\`aro boundedness and 
Ces\`aro hypercyclicity for linear continuous operators in Banach spaces. We close the paper with the observation that these concepts can be analyzed for general sequences of binary relations over metric spaces.

\vspace{.1in}

\begin{thebibliography}{90}

\bibitem{abakumov}
\textsc{E. Abakumov, M. Boudabbous, M. Mnif,}
\emph{On hypercyclicity of linear relations,}
Results Math. (2018) {\bf73:137}. https://doi.org/10.1007/s00025-018-0900-z.

\bibitem{akin}
{\sc E. Akin, S. Kolyada,} 
{\it Li-Yorke sensitivity,}
Nonlinearity 16 (2003), 1421--1433.

\bibitem{bayart}
\textsc{F. Bayart, E. Matheron,}
\emph{Dynamics of Linear Operators,}
Cambridge Tracts in Mathematics, Cambridge University
Press, Cambridge, UK, \textbf{179(1)}, 2009.

\bibitem{blan}
{\sc F. Blanchard, E. Glasner, S. Kolyada, A. Maass,}
{\it On Li-Yorke pairs,}
J. Reine Angew.
Math. 547 (2002), 51--68.

\bibitem{2011}
\textsc{T. Berm\'udez, A. Bonilla, F. Martinez-Gimenez, A. Peris,}
\emph{Li-Yorke and distributionally chaotic operators,}
J. Math. Anal. Appl. \textbf{373} (2011),  83--93.

\bibitem{2013JFA}
\textsc{N. C. Bernardes Jr., A. Bonilla, V. M\"uler, A. Peris,}
\emph{Distributional chaos for linear operators,} J. Funct. Anal.
\textbf{265} (2013),  no. 1, 2143--2163.

\bibitem{2018JMMA}
\textsc{N. C. Bernardes Jr., A. Bonilla, A. Peris, X. Wu,}
\emph{Distributional chaos for operators on Banach spaces,}
J. Math. Anal. Appl. {\bf 459} (2018), 797--821.

\bibitem{band}
{\sc N. C. Bernardes Jr, A. Bonilla, V. M\"uler, A. Peris,}
{\it Li-Yorke chaos in linear dynamics,}
Ergodic Theory Dynamical Systems
35 (2015), 1723--1745.

\bibitem{cross}
\textsc{R. Cross,}
\emph{Multivalued Linear Operators,}
Marcel Dekker Inc., New York, 1998.

\bibitem{mendoza}
\textsc{J. A. Conejero, M. Kosti\' c, P. J. Miana, M. Murillo-Arcila,}
\emph{Distributionally chaotic families of operators on Fr\' echet spaces,}
Comm. Pure Appl. Anal. {\bf 15} (2016), 1915--1939.

\bibitem{kerry-drew}
\textsc{J. A. Conejero, C.-C. Chen, M. Kosti\' c, M. Murillo-Arcila,}
\emph{Dynamics of multivalued linear operators,}
Open Math.  {\bf 15} (2017), 948-958.

\bibitem{kd-prim}
\textsc{J. A. Conejero, C.-C. Chen, M. Kosti\' c, M. Murillo-Arcila,}
\emph{Dynamics on binary relations over topological spaces,}
Symmetry {\bf 2018}, 10, 211; doi:10.3390/sym10060211.

\bibitem{duan}
\textsc{J. Duan, X.-C. Fu, P.-D. Liu, A. Manning,}
\emph{A linear chaotic quantum harmonic oscillator,}
Appl. Math. Lett.
{\bf 12} (1999), 15--19.

\bibitem{erdper}
\textsc{K.-G. Grosse-Erdmann, A. Peris,}
\textit{Linear Chaos},
Springer-Verlag, London, 2011.

\bibitem{faviniyagi}
\textsc{A. Favini, A. Yagi,}
\emph{Degenerate Differential Equations in Banach Spaces,}
Chapman and Hall/CRC
Pure and Applied Mathematics, New York, 1998.

\bibitem{fu-wang}
\textsc{H. M. Fu, J. C. Xiong, H. Y. Wang,}
\emph{The hierarchy of distributional chaos,} 
Internat. J. Bifur. Chaos Appl. Sci. Engrg. {\bf 25(1)} (2015) 1550001 (10 pages), DOI: 10.1142/S0218127415500017.

\bibitem{huang}
{\sc W. Huang, X. Ye,} 
{\it Devaney’s chaos or 2-scattering implies Li-Yorke chaos,}
Topology
Appl. 117 (2002), 259--272.

%\bibitem{kelly}
%{\sc L. M. Kelly,}
%{\it Distance sets,}
%Canadian J. Math. {\bf 
%3} (1951), 187--194.

\bibitem{knjigah}
\textsc{M. Kosti\'c,} {\it Generalized Semigroups and Cosine Functions,}
Mathematical Institute SANU, Belgrade, 2011.

\bibitem{knjigaho}
\textsc{M. Kosti\'c,} {\it Abstract Volterra Integro-Differential
Equations,} CRC Press, Boca Raton, Fl., 2015.

\bibitem{marek-faac}
\textsc{M. Kosti\'c,}
\emph{${\mathcal F}$-hypercyclic extensions and disjoint ${\mathcal F}$-hypercyclic extensions of binary relations over topological spaces,}
Funct. Anal. Approx. Comput. {\bf 10} (2018), 41--52.

%\bibitem{F-operatori}
%\textsc{M. Kosti\'c,} 
%{\it ${\mathcal F}$-Hypercyclic operators on Fr\' echet spaces,}
%Publ. Inst. Math. S\' er., submitted.

%%\bibitem{ddc}
%\textsc{M. Kosti\' c,}
%\emph{Disjoint distributional chaos in Fr\'echet spaces,}
%preprint, arXiv:1812.03824.

\bibitem{nis-dragan}
{\sc M. Kosti\' c,}
\emph{Li-Yorke chaotic properties of abstract differential equations of first order,}
Appl. Math. Comput. Sci. {\bf 1} (2016), 15--26.

\bibitem{LY}
{\sc T. Y. Li, J. A. Yorke,} 
{\it Period three implies chaos,}
Amer. Math. Monthly. 2 (1975), 985--992.

\bibitem{luo}
\textsc{L. Luo, B.Hou,}
{\it Some remarks on distributional chaos for bounded linear operators,}
Turkish J. Math. {\bf 39} (2015), 251--258.

\bibitem{p-oprocha}
\textsc{P. Oprocha,}
\emph{A quantum harmonic oscillator and strong chaos,} 
J. Phys. A {\bf 39} (2006), 14559--14565.

\bibitem{p-oprocha-tams}
\textsc{P. Oprocha,}
\emph{Distributional chaos revisited,} 
Trans. Amer. Math. Soc. {\bf 361} (2009), 4901--4925.

\bibitem{smital}
\textsc{B. Schweizer, J. Sm\' ital,}
\emph{Measures of chaos and a spectral decomposition of dynamical systems on the interval,}
Trans. Amer. Math. Soc. {\bf 344} (1994), 737--754.

\bibitem{svir-fedorov}
\textsc{G. A. Sviridyuk, V. E. Fedorov,}
\emph{Linear Sobolev Type Equations and Degenerate Semigroups of Operators,}
Inverse and Ill-Posed Problems (Book {\bf 42}), VSP, Utrecht, Boston, 2003.

\bibitem{tan-fu}
\textsc{F. Tan, H. M. Fu,}
\emph{On distributional $n-$chaos,} 
Acta Math. Sci. Ser. B Engl. Ed. {\bf 34(5)} (2014), 1473--1480.

\end{thebibliography}
\end{document}